 \newtheorem{theo}{Theorem}[section]
  \newtheorem{lem}[theo]{Lemma}
\newtheorem{propos}[theo]{Proposition}
\newtheorem{cor}[theo]{Corollary}
\theoremstyle{definition}
 \newtheorem{definition}[theo]{Definition}
 \newtheorem{prob}[theo]{Problem}
\newcommand{\R}{{\mathbb{R}}}
\newcommand{\N}{{\mathbb{N}}}
\newcommand{\eps}{\varepsilon}
\newcommand{\diam}{\mathrm{diam}}
\newcommand{\conv}{{\mathrm{conv}}}
\newcommand{\spn}{\mathrm{span}}
\newcommand{\eins}{{\mathds{1}}}
\newcommand{\bea}{\begin{eqnarray*}}
\newcommand{\eea}{\end{eqnarray*}}
\newcommand{\beq}{\begin{eqnarray}}
\newcommand{\eeq}{\end{eqnarray}}
\newcommand{\sign}{{\mathrm{sign}}}
\newcommand{\card}{{\mathrm{card}}}
\newcommand{\hausd}{\rho_H}
\renewcommand{\leq}{\leqslant}
\renewcommand{\geq}{\geqslant}
\renewcommand{\le}{\leqslant}
\renewcommand{\ge}{\geqslant}
\numberwithin{equation}{section}
\begin{document}
\title[SLLN for Random Sets in Banach spaces]{Strong Law of Large Numbers for Random Sets in Banach spaces}

\author[Kadets]{V. Kadets}

\address[Kadets]{ \href{http://orcid.org/0000-0002-5606-2679}{ORCID: \texttt{0000-0002-5606-2679}} {School of Mathematical Sciences, Holon Institute of Technology (Israel)}}
\email{kadetsv@hit.ac.il}

\author[Zavarzina]{O. Zavarzina}
\address[Zavarzina]{\href{http://orcid.org/0000-0002-5731-6343}{ORCID: \texttt{0000-0002-5731-6343}} School of Mathematics and Informatics, V.N. Karazin Kharkiv National University (Ukraine)}
\email{olesia.zavarzina@yahoo.com}

\subjclass[2020]{46B09; 46B26; 46G10; 47H04; 60F15}
\keywords{strong law of large numbers; random set; R{\aa}dstr\"om embedding}

\begin{abstract}
The Strong Law of Large Numbers (SLLN) for random variables or random vectors with different mathematical expectations easily reduces by means of shifts to SLLN for random variables or random vectors whose mathematical expectations are equal to zero. The situation changes for random sets, where shifts cannot reduce sets of more than one point to the set $\{0\}$. We study effects that appear because of this difference.
\end{abstract}

\maketitle
\section{Introduction}

There are three forms of the Strong Law of Large Numbers (SLLN in short) that are evidently equivalent for random vectors or random variables.
Namely, let us say that \\
-- a Banach space $X$ enjoys \emph{the full form of SLLN} if for every probability space  $(\Omega, \Sigma, P)$ and for every uniformly bounded independent sequence of random vectors $\Psi_n: \Omega \to X$ with mathematical expectation  $E(\Psi_n) = x_n$ (in the sense of Bochner integral) one has
$$
\left\|\frac{1}{n}\sum\limits_{i=1}^{n} \Psi_i(\omega) - \frac{1}{n}\sum\limits_{i=1}^{n} x_i\right\| \xrightarrow[n \to \infty]{a.e.} 0,
$$
--  a Banach space $X$ enjoys \emph{the intermediate form of SLLN}  if for every probability space  $(\Omega, \Sigma, P)$, for every $x \in X$ and for every uniformly bounded independent sequence of random vectors $\Psi_n: \Omega \to X$ that have the same mathematical expectation  $E(\Psi_n) = x$ one has
$$
\left\|\frac{1}{n}\sum\limits_{i=1}^{n} \Psi_i(\omega) - x\right\| \xrightarrow[n \to \infty]{a.e.} 0,
$$
and, finally,  \\
-- a Banach space $X$ enjoys \emph{the reduced form of SLLN}  if for every probability space  $(\Omega, \Sigma, P)$ and for every uniformly bounded independent sequence of random vectors $\Psi_n: \Omega \to X$ with $E(\Psi_n) = 0$ one has
$$
\left\|\frac{1}{n}\sum\limits_{i=1}^{n} \Psi_i(\omega)\right\| \xrightarrow[n \to \infty]{a.e.} 0.
$$
Evidently,  the full form implies the intermediate form, which in its turn implies the reduced form, and one easily deduces the full form from the reduced one by considering the auxiliary random vectors: $\Phi_i = \Psi_i - x_i$.

For this reason, all the three forms of SLLN in a Banach space $X$ are just called SLLN. It is well known that the SLLN in a Banach space $X$ is equivalent to a geometric property of $X$ called \emph{B-convexity} \cite{beck62, beGiWa75, Giesy1966}, and that in fact the convergence remains true if one instead of uniform boundedness of the random vectors demands uniform boundedness of their variances.

The situation changes when one passes from random vectors to random sets (i.e. multifunctions acting from a probability space, for details on multifunctions and random sets see \cite{hess}, \cite{Molc} and references therein).
All the three forms of SLLN make sense for random sets.   The implications \\
\centerline{ (full form)~$\Longrightarrow$~(intermediate form)~$\Longrightarrow$~(reduced form) }\\
remain evident, but the converse reduction as above does not work for random sets because $E\left(\Psi_i - E(\Psi_i)\right) \neq \{0\}$ for random sets (this is the same effect that $A - A \neq \{0\}$ for sets).

The goal of this article is to establish which forms of SLLN are valid for random sets. After introducing in Section \ref{prelim} the necessary preliminaries about mulifunctions and their integration, we show in Section \ref{form1} that the full form of SLLN for random sets is not valid in any infinite-dimensional Banach space even if we restrict ourselves to random sets that take convex finite-dimensional values. After that, we give in Section \ref{form1fd} an independent proof of the full form of SLLN for random sets in the case of finite-dimensional space $X$ (which can also be deduced from known facts, see \cite[Theorem 1.19]{Molc}).

The reduced form of SLLN for random sets is valid in every B-convex space, because zero mathematical expectation of a random set in $X$ implies that this random set reduces to a random vector in $X$. We show this in detail in Section \ref{form3}, where the main difficulty that we overcome is the extension of the result from separable to non-separable spaces.

Finally, Section \ref{form2} is devoted to the intermediate form of SLLN for random sets, where we have only partial results. Namely, we demonstrate that in the case of $E(\Psi_n)$  being one the same finite-dimensional set, the intermediate form of SLLN for random sets is valid in B-convex spaces, but it remains unclear for us whether the condition of finite-dimensionality can be weaken to, say, compactness.


\section{Preliminaries} \label{prelim}
\subsection{Basic facts and notation}
 In our paper, the letter $X$ stands for a Banach space and $X^*$ denotes the dual space.  For an element $a \in X$ and a subset $B\subset X$ we denote by $\rho(a,B)$ the distance between $a$ and $B$, i.e.
$\rho(a,B)=\inf\limits_{b \in B}\Vert a - b \Vert$.

Denote by $b(X)$ ($bc(X)$) the collection of all non-empty bounded (bounded and convex, respectively) subsets of $X$. For $A \in b(X)$ we denote
$$
\|A\| = \sup_{a \in A} \|a\|.
$$

We will call the \textit{one-sided Hausdorff distance} between
sets $A, B \in b(X)$ the number $\bar{\rho}_H(A,B) = \sup\limits_{b\in B}\rho(b,A)$. This definition means that, for given $r \ge 0$, $\bar{\rho}_H(A,B) \le r$ if and only if for every $b \in B$ and for every $\eps > 0$ there is $a \in A$ with $\|a - b\| < r + \eps$.

Let us remark that $\bar{\rho}_H(A,B)$ is decreasing in the first variable in the following sense: if $A, \widetilde A , B \in b(X)$ and $A \subset \widetilde A$, then
$\bar{\rho}_H(\widetilde A,B) \le \bar{\rho}_H(A,B)$.

The \textit{Hausdorff distance} between $A$ and $B$ is the number
$$
\rho_H(A,B) = \max \{ \bar{\rho}_H(A,B), \bar{\rho}_H(B,A)\}.
$$
 With $\conv B$ and $\spn B$ we denote the convex hull and the closed linear hull of the set $B$ respectively, $\diam B$ denotes the diameter of $B$, $\card(M)$ stays for the number of elements of the finite set $M$, for a subset $\Delta$ of a set $\Omega$ we denote $\mathds{1}_{\Delta}$ the characteristic function of $\Delta$, and $\ker P$ stands for the kernel of an operator $P$.

\subsection{R{\aa}dstr\"om embedding and Bochner integral for multifunctions}

The collection of sets $bc(X)$ is a cone with respect to the natural operations of Minkowski sum
$$
A+B=\{a + b: a \in A, b \in B\}
$$
and of multiplication by positive scalar $t$:
$$
tA=\{ta: a \in A\}.
$$
Outside of this, $bc(X)$ is a complete pseudometric space with respect to the Hausdorff distance.
$bc(X)$ is not a metric space because the Hausdorff distance does not distinguish between two sets that share the same closure. In order to make from $bc(X)$ a metric space one introduces the following equivalence relation: $A\approx B$ if $\rho_H(A,B) = 0$, and denotes $BC(X)$ the quotient space of the cone $bc(X)$ by the equivalence relation  $\approx$. This means that the elements of $BC(X)$ are equivalence classes of the form $[A] = \{B \in bc(X): A\approx B\}$.
The cone operations on $BC(X)$ are naturally defined as $[A] + [B] = [A + B]$, $t[A]=[tA]$, and the Hausdorff distance $\rho_H([A],[B]) := \rho_H(A,B)$     is a metric on  $BC(X)$.

With a little abuse of rigor, we will say that $BC(X)$ is the metric cone of all convex bounded subsets of the Banach space $X$, keeping in mind that equivalent sets are considered as equal ones.

One can avoid the  above difficulties in the definition of $BC(X)$ restricting the consideration to closed convex bounded subsets, but this leads to another inconvenience. Namely, the Minkowski sum of two closed sets is not necessarily closed, so if one wants to stay in the frames of closed sets, the sum of two sets should be defined as the closure of their Minkowski sum. The approach with closed sets is adopted for example in \cite{hess}. The both difficulties disappear if one restricts the considerations to compact convex sets. This is one of many reasons why very often one considers the integration of multifunctions only for functions that take compact convex values.

R{\aa}dstr\"om \cite{Radstr} remarked a very useful possibility of reducing the cone of closed convex bounded subsets of a space $X$ to a convex cone of elements in  another Banach spaces preserving the cone operations and the distance. Let us describe an explicit construction for such an embedding \cite[pp. 626 -- 627]{hess}.

\begin{definition} \label{def-radstr}
Denote $Z=Z(X)$ the linear space of all bounded continuous real functions on the unit sphere $S_{X^*}$ of the dual space and equip $Z$ with the norm
$$
\|f\| = \sup_{x^* \in S_{X^*}} |f(x^*)|.
$$
Define the R{\aa}dstr\"om embedding $R: BC(X) \to Z$ as follows: for every $A \in BC(X)$
$$
\left(R(A)\right)\left(x^*\right) =  \sup_{a \in A} x^*(a).
$$
\end{definition}
The definition is consistent because $\sup_{a \in A} x^*(a) = \sup_{a \in
\overline A} x^*(a)$.
It is plain that $R(tA+sB) = tR(A) + sR(B)$ for $A, B \in BC(X)$ and $t, s > 0$.
The equality $\rho_H(A,B) = \|R(A) - R(B)\|$ is a consequence of the Hahn-Banach theorem.

 The R{\aa}dstr\"om embedding enables us to transfer some definitions and facts about usual functions to the theory of multifunctions. A good example is the definition of the Bochner integral for multifunctions.

Let $(\Omega, \Sigma, \mu)$ be a measure space with $\mu(\Omega)<\infty$ and let $F\colon \Omega\to BC(X)$ be a multifunction. Then $R \circ F\colon \Omega\to Z(X)$ is an ordinary vector-valued function, and in this case the Bochner integral is a classical notion, defined through approximation by simple functions \cite{die-uhl-J}. One says that the multifunction $F$ is Bochner integrable with $\int_{\Omega} F d\mu = A$ if the ordinary function $R \circ F$ is  Bochner integrable with $\int_{\Omega} R \circ F d\mu = R(A)$.

This definition enables us to obtain ``for free'' the basic results on the Bochner integral of multifunctions. Nevertheless, it makes sense to ``decode'' the definition. One of the possibilities is the following one.

A function of the form
$$f= \sum_{i=1}^{n}U_i \mathds{1}_{\Delta_i} $$
with $U_i \in BC(X)$ and $\Delta_i \in \Sigma$ is called a simple multifunction. The Bochner integral of $f$ is defined as
$$\int_{\Omega}^{}f d\mu=\sum_{i=1}^{n}U_i \mu(\Delta_i). $$
A general multifunction $F\colon \Omega \to bc(X)$ is  Bochner integrable if the scalar function $t \mapsto \|F(t)\|$ has a Lebesgue integrable majorant and there are simple multifunctions $F_n\colon \Omega \to BC(X)$ such that $F_n(t) \xrightarrow[n \to \infty]{a.e.} F(t)$. These $F_n$ can be selected in such a way that all of them have a common Lebesgue integrable majorant, and then the Bochner integral of $F$ is
$$
\int_{\Omega} F d\mu = \lim_{n\to \infty}\int_{\Omega}^{}F_n d\mu.
$$

\subsection{R{\aa}dstr\"om embedding and random sets}

In our paper, we refer to the theory of random sets only for convex sets. The main reason is that, in the non-convex case, even the definition of the mathematical expectation is not so clear. Say, if a probability space consists of one point and the random set $F_1$ equals $\{-1, 1\}$ in that point, then any reasonable definition should say that the mathematical expectation  $E(F_1)$ is equal to $\{-1, 1\}$. Now, if a probability space consists of two points $t, \tau$ with $P(t) = P(\tau) = \frac12$, and the random set $F_2$ equals $\{-1, 1\}$ in both the points, then, for any intuitively reasonable definition of identically distributed random sets, $F_1$ and  $F_2$ should be  identically distributed. So, on the one hand, we should expect $E(F_2) = E(F_1) = \{-1, 1\}$, but on the other hand, $E(F_2)$ should be equal to
$$
F_2(t)P(t) + F_2(\tau)P(\tau) = \frac12 \{-1, 1\} +  \frac12 \{-1, 1\} = \{-1, 0, 1\}.
$$
This problem can be fixed if one restricts the considerations to non-atomic probability spaces, but these subtleties are inessential for the problem that we address in our paper.

The key to probabilistic concepts for convex random sets is the R{\aa}dstr\"om embedding. Let $(\Omega, \Sigma, \mu)$ be a probability space and $F\colon \Omega \to BC(X)$ be a multifunction. $F$ is said to be a random set if $R \circ F\colon \Omega \to Z(X)$ is measurable in the sense that $(R \circ F)^{-1}(U) \in \Sigma$ for every open subset $U \subset Z(X)$.  $F$ has mathematical expectation if it is Bochner integrable, and $E(F) := \int_{\Omega} F d\mu$. A collection $\{F_\alpha\}$ of random sets is independent, if the corresponding collection $\{R \circ F_\alpha\}$ of random vectors is independent. Two $BC(X)$-valued random sets $F_1, F_2$ (maybe on different probability spaces) are  identically distributed, if the corresponding random vectors $R \circ F_1, R \circ F_2$ are identically distributed.


\section{The full form of SLLN} \label{form1}

The forms of the SLLN that we mentioned in the introduction do not require that the random vectors $\Psi_n: \Omega \to X$ are  identically distributed, and for some spaces (namely, for B-convex ones)  the corresponding SLLN is valid and for others is not. In contrast to this, the SLLN for sequences of  independent identically distributed (i.i.d) random vectors works in every Banach space, and it can be easily transferred to i.i.d. random convex bounded sets with the help of the R{\aa}dstr\"om embedding (see \cite{artvit, purral}, where the result is stated for random compact sets, but the proof works in the general $BC(X)$ case as well).

It would be natural to apply the same idea to our forms of SLLN. Namely, for a  Banach space $X$, we can take a uniformly bounded independent sequence of random sets $\Psi_n: \Omega \to BC(X)$ having mathematical expectations and apply the R{\aa}dstr\"om embedding. We obtain a sequence  $R \circ \Psi_n: \Omega \to Z(X)$ of random vectors and for random vectors there could be a chance to get the result if  $\overline{R(BC(X))}$ would be B-convex. Unfortunately, this does not work because, according to  \cite[Theorem 4.2]{ArtKad}, for every infinite-dimensional $X$, $\overline{R(BC(X))}$ is not B-convex.

Below, we use some elements of the proof of \cite[Theorem 4.2]{ArtKad} in order to demonstrate that our full form of SLLN does not hold true in any  infinite-dimensional $X$. Moreover (Theorem \ref{thm-absSLLN}), it does not hold true even if we restrict our considerations to uniformly bounded sequences of finite-dimensional compact convex random sets and even for convergence in probability.

We are going to use the following two results. The first one is a lemma of combinatorial character.

\begin{lem}[{\cite[Lemma 4.1]{ArtKad}}] \label{combin}
Given a natural number $n$, consider a set $M$ with $\card(M) = 2^n$. There are $n$ subsets of $M$, say $T_1, \ldots, T_n$, such that whenever $W$ is a subset of $\{1, \ldots, n\}$,  then there is an element $m_W$ in $M$, such that $m_W \in T_j$ whenever $j \in  W$ and $m_W \not \in T_j$ when $j \not \in W$.
\end{lem}

The second one is the famous Auerbach's lemma, see for example \cite[Proposition 1.c.3]{L-Tz}.

\begin{theo} \label{thm-Auer}
Let $X$ be a Banach space of $\dim X \ge n$, then there is a collection of elements $\{e_k\}_{k=1}^n$ in $S_X$ such that
$$
\left\|\sum_{k=1}^n b_k e_k\right\| \ge \max_{1 \le k \le n} |b_k|
$$
for all collections of reals $\{b_k\}_{k=1}^n$.
\end{theo}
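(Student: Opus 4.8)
The plan is to prove the sharper, standard version of Auerbach's lemma, from which the stated inequality is immediate: there exist $\{e_k\}_{k=1}^n \subset S_X$ \emph{and} functionals $\{e_k^*\}_{k=1}^n \subset S_{X^*}$ with $e_k^*(e_j) = \delta_{jk}$. Granting this, for arbitrary reals $b_1, \dots, b_n$ and each index $k$ one has
$$
|b_k| = \Bigl| e_k^*\Bigl(\sum_{j=1}^n b_j e_j\Bigr)\Bigr| \le \|e_k^*\|\,\Bigl\|\sum_{j=1}^n b_j e_j\Bigr\| = \Bigl\|\sum_{j=1}^n b_j e_j\Bigr\|,
$$
and taking the maximum over $k$ gives the claim.

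First I would reduce to finite dimensions. Since $\dim X \ge n$, fix an $n$-dimensional subspace $Y \subseteq X$, choose a linear isomorphism identifying $Y$ with $\R^n$, and let $D(y_1, \dots, y_n)$ be the determinant of the matrix whose columns are the coordinate vectors of $y_1, \dots, y_n$. The map $D$ is continuous, and $S_Y \times \cdots \times S_Y$ ($n$ factors) is compact, so $|D|$ attains a maximum there, say at $(e_1, \dots, e_n)$. This maximum is strictly positive, because $Y$ has a basis and normalizing its vectors yields a tuple of unit vectors with nonzero determinant; hence $e_1, \dots, e_n$ form a basis of $Y$.

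Next I would produce the biorthogonal functionals by Cramer's rule: for $y \in Y$ set
$$
e_k^*(y) = \frac{D(e_1, \dots, e_{k-1}, y, e_{k+1}, \dots, e_n)}{D(e_1, \dots, e_n)}.
$$
Multilinearity of $D$ makes each $e_k^*$ linear on $Y$, and its alternating property gives $e_k^*(e_j) = \delta_{jk}$. To estimate the norm, take $y \in S_Y$: substituting $y$ for $e_k$ in the extremal tuple keeps all entries in $S_Y$, so $|D(e_1,\dots,y,\dots,e_n)| \le |D(e_1,\dots,e_n)|$ by maximality, i.e. $|e_k^*(y)| \le 1$; combined with $e_k^*(e_k)=1$ and $\|e_k\|=1$ this gives $\|e_k^*\|_{Y^*} = 1$. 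Finally extend each $e_k^*$ to all of $X$ by the Hahn--Banach theorem without increasing the norm, obtaining $e_k^* \in S_{X^*}$ with $e_k^*(e_j)=\delta_{jk}$, as required.

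The one point requiring genuine insight is the choice of extremal object: it must be a maximizer of the \emph{volume} functional $|D|$ on $S_Y^{\,n}$, since maximality is exactly what forces $\|e_k^*\| \le 1$ — an arbitrary basis would not do. The remaining ingredients (compactness supplying the maximizer, nonvanishing of the maximum, and norm-preserving extension) are routine.
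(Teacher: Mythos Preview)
Your proof is correct and is precisely the classical volume-maximization argument for Auerbach's lemma. The paper, however, does not supply a proof at all: it simply cites the result as the well-known Auerbach lemma, referring to \cite[Proposition~1.c.3]{L-Tz}. So there is nothing to compare beyond noting that you have written out in full the standard proof that the paper takes for granted.
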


The last ingredient of the promised construction will be the following lemma extracted (with some modifications) from the demonstration of \cite[Theorem 4.2]{ArtKad}.

\begin{lem} \label{constr-old}
For every $n \in \N$ and every Banach space $X$ of $\dim X \ge 2^n$ there is a collection of  finite-dimensional compact convex subsets  $\{V_k\}_{k=1}^n$ of $B_X$ such that, for every collection of reals $\{a_k\}_{k=1}^n$,
\begin{equation} \label{0eqq111}
\left\|\sum\limits_{k=1}^{n} a_k R(V_k)\right\| \geq \frac{1}{2}\sum_{k=1}^n |a_k|,
\end{equation}
where $R$ is the  R{\aa}dstr\"om embedding. In particular, for every
choice of $\theta_k = \pm 1$,
\begin{equation} \label{0eqq112}
\left\|\sum\limits_{k=1}^{n} \theta_k R(V_k)\right\| \geq \frac{n}{2}.
\end{equation}

\end{lem}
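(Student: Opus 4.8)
The plan is to combine Lemma \ref{combin} and Theorem \ref{thm-Auer} as follows. Since $\dim X \ge 2^n$, apply Auerbach's lemma with $N = 2^n$ to obtain a collection $\{e_m\}_{m \in M}$, indexed by a set $M$ with $\card(M) = 2^n$, of unit vectors in $S_X$ such that $\bigl\|\sum_{m \in M} b_m e_m\bigr\| \ge \max_{m} |b_m|$ for all choices of reals. Next, apply Lemma \ref{combin} to $M$ to get subsets $T_1, \dots, T_n \subset M$ with the stated $W$-selection property. The natural definition is then
$$
V_k = \conv\{ e_m : m \in T_k\},
$$
which is a finite-dimensional (dimension at most $2^n$) compact convex subset of $B_X$, since it is the convex hull of finitely many unit vectors.

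The second step is to evaluate $R(V_k)$ on suitably chosen functionals. For a subset $W \subset \{1, \dots, n\}$, let $m_W \in M$ be the element guaranteed by Lemma \ref{combin}, and let $x^*_W \in S_{X^*}$ be a norm-one functional with $x^*_W(e_{m_W}) = 1$ (which exists by Hahn--Banach, as $\|e_{m_W}\| = 1$; in fact Auerbach's lemma supplies a biorthogonal system, so one may take $x^*_W = e^*_{m_W}$, but Hahn--Banach is enough). Now I would estimate $(R(V_k))(x^*_W) = \sup_{a \in V_k} x^*_W(a) = \max_{m \in T_k} x^*_W(e_m)$. When $k \in W$ we have $m_W \in T_k$, so this supremum is at least $x^*_W(e_{m_W}) = 1$; when $k \notin W$ we have $m_W \notin T_k$, and here the Auerbach estimate forces $x^*_W(e_m)$ to be small for $m \ne m_W$ — more precisely, I expect $|x^*_W(e_m)| \le$ something controlled. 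This control is the crux: from $\bigl\|\sum b_m e_m\bigr\| \ge \max |b_m|$ one gets, by testing against $x^*_W$, that $\bigl|\sum_m b_m x^*_W(e_m)\bigr| \le \bigl\|\sum_m b_m e_m\bigr\|$, which does not immediately bound individual coefficients $x^*_W(e_m)$. So instead I would use the biorthogonal functionals $\{e^*_m\}$ from Auerbach's lemma directly: $e^*_{m_W}(e_m) = \delta_{m,m_W}$, hence $(R(V_k))(e^*_{m_W}) = 1$ if $k \in W$ and $= 0$ if $k \notin W$ (using $m \in T_k \Rightarrow m \ne m_W$ when $k \notin W$). This is cleaner and gives exact values.

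With these evaluations in hand, I would prove \eqref{0eqq111}. Given reals $a_1, \dots, a_n$, choose $W = \{k : a_k \ge 0\}$ and set $x^* = e^*_{m_W}$. Then
$$
\Bigl\|\sum_{k=1}^n a_k R(V_k)\Bigr\| \ge \Bigl|\Bigl(\sum_{k=1}^n a_k R(V_k)\Bigr)(x^*)\Bigr| = \Bigl|\sum_{k=1}^n a_k (R(V_k))(x^*)\Bigr| = \Bigl|\sum_{k \in W} a_k\Bigr| = \sum_{k : a_k \ge 0} |a_k|.
$$
Repeating with $W' = \{k : a_k < 0\}$ gives $\bigl\|\sum a_k R(V_k)\bigr\| \ge \sum_{k : a_k < 0} |a_k|$. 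Adding the two and dividing by $2$ yields $\bigl\|\sum a_k R(V_k)\bigr\| \ge \tfrac12 \sum_{k=1}^n |a_k|$, which is \eqref{0eqq111}. Finally, \eqref{0eqq112} is the special case $a_k = \theta_k = \pm 1$, for which $\sum_k |a_k| = n$.

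The main obstacle is the middle step — extracting a usable estimate for $(R(V_k))$ on well-chosen functionals. The subtlety is that Auerbach's lemma as stated in Theorem \ref{thm-Auer} gives only the lower bound $\bigl\|\sum b_k e_k\bigr\| \ge \max_k |b_k|$, whereas what one really wants is the associated biorthogonal system $\{e_k^*\} \subset S_{X^*}$ with $e_j^*(e_k) = \delta_{jk}$; the standard formulation of Auerbach's lemma provides this, and one should either invoke it in that stronger form or derive the biorthogonal functionals via Hahn--Banach together with the $\max$-norm estimate (a short argument shows $x \mapsto$ the $k$-th coefficient is a bounded functional of norm $\le 1$, extendable to all of $X$). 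Once the right functionals $x^*_W = e_{m_W}^*$ are on the table, the combinatorial selection property of $T_1, \dots, T_n$ does all the remaining work and the norm estimates are immediate.
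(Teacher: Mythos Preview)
Your proposal is correct. The construction of the sets $V_k = \conv\{e_m : m \in T_k\}$ is identical to the paper's, and your final estimate is valid once the biorthogonal functionals $e^*_m \in S_{X^*}$ are in hand; your remark that these follow from the stated form of Theorem~\ref{thm-Auer} via Hahn--Banach (the $j$-th coordinate map on $\spn\{e_k\}$ has norm $\le 1$ by the Auerbach inequality, and norm exactly $1$ since it attains $1$ at $e_j$) is accurate.

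The route to the inequality, however, differs from the paper's. You work on the dual side: you evaluate $\sum_k a_k R(V_k)$ at the single functional $e^*_{m_W} \in S_{X^*}$ and exploit the exact values $(R(V_k))(e^*_{m_W}) \in \{0,1\}$ coming from biorthogonality. The paper instead works on the primal side via the Hausdorff distance: it splits $\sum_k a_k R(V_k)$ as $R\bigl(\sum_{k \in W} |a_k| V_k\bigr) - R\bigl(\sum_{k \notin W} |a_k| V_k\bigr)$, observes that $s e_{m_W}$ lies in the first Minkowski sum (where $s = \sum_{k \in W} |a_k|$), and then uses the Auerbach lower bound directly on vectors to show $\|s e_{m_W} - z\| \ge s$ for every $z$ in the second sum, since any such $z$ lies in $\spn\{e_m : m \neq m_W\}$. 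Your argument is arguably cleaner (it yields exact coefficients $0$ and $1$), while the paper's avoids any appeal to the dual biorthogonal system and uses Theorem~\ref{thm-Auer} exactly as stated.
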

\noindent \textit{Proof.}
At first, thanks to Auerbach's lemma  (Theorem \ref{thm-Auer}),  there is a collection of elements $\{e_k\}_{k=1}^{2^n} \subset S_X$ such that
$$
\left\|\sum_{k=1}^{2^n} b_k e_k \right\| \ge \max_{1 \le k \le 2^n} |b_k|
$$
for all collections of reals $\{b_k\}_{k=1}^{2^n}$.

We apply Lemma \ref{combin} with $M = \{1, \ldots, 2^n\}$ and get the corresponding subsets $T_1, \ldots, T_n \subset M$. Finally, we define the requested subsets  $\{V_k\}_{k=1}^n$ of $B_X$ as
$$
V_k = \conv\{e_i : i \in T_k\}.
$$
Now, let us fix $a_k \in \R$, $k = 1, \ldots , n$ and check the validity of \eqref{0eqq111}.
Let $W$ be the
subset of those $k \in \{1, \ldots, n\}$ where $\sign a_k = 1$. Denote $s = \sum_{k \in W} |a_k|$. Without loss
of generality $s \ge \frac{1}{2}\sum_{k=1}^n |a_k|$, otherwise we may work with the numbers $-a_k$ instead of $a_k$. By Lemma \ref{combin}, there
is an index $m \in M$ such that $m \in T_j$ whenever $j \in  W$ and $m \not \in T_j$ when $j \not \in W$. The corresponding
unit vector $e_m$ belongs to each $V_k$ with $k \in W$.
This implies that $se_m \in \sum_{k \in W} a_k V_k$.
On the other hand, $\|se_m - z\| \ge  s$ for all elements
$z \in \sum\limits_{i\notin W}|a_i| V_i$. Consequently,
\begin{align*}
\left\|\sum\limits_{i=1}^{n} a_i R(V_i)\right\| &= \left\| R\left(\sum\limits_{i\in W}|a_i|V_i\right) - R\left(\sum\limits_{i\notin W}|a_i|V_i\right) \right\|  \\
&= \hausd\left(\sum\limits_{i\in W}|a_i|V_i, \sum\limits_{i\notin W}|a_i|V_i\right)
 \ge \|se_m\| \ge \frac{1}{2}\sum_{k=1}^n |a_k|. \qed
\end{align*}

Let $(\psi_n)$ be an independent sequence of Bernoulli random variables on a probability space $(\Omega, \Sigma, P)$ taking values 0 and 1 with the probability  $\frac{1}{2}$.

\begin{theo} \label{thm-absSLLN}
For every infinite-dimensional Banach space $X$ there exists a sequence $(V_n)$ of  finite-dimensional compact convex subsets of the unit ball of $X$ and there is an $\eps > 0$ such that, for $\Psi_n = \psi_n \cdot V_n$,
\begin{align} \label{kein-SLLN}
\hausd\left(\frac{1}{4^n}\sum\limits_{i=1}^{4^n} \Psi_i(\omega), \frac{1}{4^n}\sum\limits_{i=1}^{4^n} E\left(\Psi_i(\omega)\right)\right) =  \nonumber \\ \hausd\left(\frac{1}{4^n}\sum\limits_{i=1}^{4^n} \psi_i(\omega) V_i, \frac{1}{4^{n}\cdot 2}\sum\limits_{i=1}^{4^n} V_i\right) \ge \eps
\end{align}
 in every point $\omega \in \Omega$ and for all $n \in \N$.
 In particular,
$$
\hausd\left(\frac{1}{N}\sum\limits_{i=1}^{N} \Psi_i(\omega), \frac{1}{N}\sum\limits_{i=1}^{N} E\left(\Psi_i(\omega)\right)\right) \not\rightarrow 0
$$
as $N \to \infty$.
\end{theo}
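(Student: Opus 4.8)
The plan is to transport the problem through the R{\aa}dstr\"om embedding $R$ and to build the sequence $(V_n)$ blockwise, applying Lemma~\ref{constr-old} on each block. First I would record two routine reductions. Since $R$ is additive for Minkowski sums, positively homogeneous, and satisfies $\hausd(A,B)=\|R(A)-R(B)\|$, and since $R\circ(\psi_i V_i)=\psi_i\,R(V_i)$ as a $Z(X)$-valued random variable, one gets $E(\Psi_i)=\frac12 V_i$ (so the second set in \eqref{kein-SLLN} is indeed $\frac1{4^n}\sum_{i=1}^{4^n}E(\Psi_i)$), and moreover, for every $\omega\in\Omega$,
\begin{align*}
\hausd\!\left(\tfrac{1}{4^n}\sum_{i=1}^{4^n}\psi_i(\omega)V_i,\ \tfrac{1}{2\cdot 4^n}\sum_{i=1}^{4^n}V_i\right)
&=\left\|\tfrac{1}{4^n}\sum_{i=1}^{4^n}\left(\psi_i(\omega)-\tfrac12\right)R(V_i)\right\|\\
&=\frac{1}{2\cdot 4^n}\left\|\sum_{i=1}^{4^n}\theta_i(\omega)R(V_i)\right\|,
\end{align*}
where $\theta_i(\omega):=2\psi_i(\omega)-1\in\{-1,+1\}$. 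Hence it suffices to produce $(V_n)$ with each $V_n$ a finite-dimensional compact convex subset of $B_X$, and a constant $c>0$, such that $\left\|\sum_{i=1}^{4^n}\theta_i R(V_i)\right\|\ge c\,4^n$ for all $n\in\N$ and all sign choices $\theta_i=\pm 1$; then \eqref{kein-SLLN} holds with $\eps=c/2$, and the displayed ``in particular'' assertion follows at once because the subsequence $N=4^n$ keeps the distance $\ge\eps$.

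For the construction I would partition $\N$ into consecutive blocks $B_1=\{1,2,3,4\}$ and $B_m=\{4^{m-1}+1,\dots,4^m\}$ for $m\ge 2$, so that $\{1,\dots,4^n\}=\bigsqcup_{m=1}^n B_m$ and $|B_m|=3\cdot 4^{m-1}$ for $m\ge 2$. On each block $B_m$ Lemma~\ref{constr-old} applies---its only hypothesis is $\dim X\ge 2^{|B_m|}$, which holds since $X$ is infinite-dimensional---yielding finite-dimensional compact convex sets $V_i\subset B_X$ for $i\in B_m$ with $\left\|\sum_{i\in B_m}\theta_i R(V_i)\right\|\ge\frac12|B_m|$ by \eqref{0eqq112}; concatenating these families over all $m$ defines $(V_n)$. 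Fixing $n\ge 2$ and splitting $\{1,\dots,4^n\}=\{1,\dots,4^{n-1}\}\sqcup B_n$, the triangle inequality together with $\|R(V_i)\|=\|V_i\|\le 1$ gives
$$
\left\|\sum_{i=1}^{4^n}\theta_i R(V_i)\right\|\ \ge\ \frac{|B_n|}{2}-\sum_{i=1}^{4^{n-1}}\|R(V_i)\|\ \ge\ \frac{3\cdot 4^{n-1}}{2}-4^{n-1}\ =\ \frac{4^{n-1}}{2},
$$
so the Hausdorff distance in \eqref{kein-SLLN} is $\ge\frac{1}{2\cdot 4^n}\cdot\frac{4^{n-1}}{2}=\frac{1}{16}$; for $n=1$ the single block of four sets gives the larger value $\frac14$. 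Thus $\eps=\frac1{16}$ works.

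The one step that needs care---and the one that explains why the scale is $4^n$ rather than $2^n$---is the last estimate: Lemma~\ref{constr-old} delivers a lower bound proportional to the cardinality of the final block, while the earlier indices can only be controlled through the crude bound $\sum_{i\le 4^{n-1}}\|R(V_i)\|\le 4^{n-1}$, so the final block must outweigh twice all of the preceding indices combined. This forces the ratio of consecutive cutoffs to exceed $3$, and $4$ is the natural integer that does the job; note that this argument requires no interaction between distinct blocks, so Lemma~\ref{constr-old} may be invoked on each block independently, with no need to place the blocks in complementary subspaces or otherwise coordinate the underlying Auerbach systems.
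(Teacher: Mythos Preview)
Your proof is correct and follows essentially the same approach as the paper: transport through the R{\aa}dstr\"om embedding, construct the $V_i$ blockwise via Lemma~\ref{constr-old} on the intervals $\{4^{m-1}+1,\dots,4^m\}$, and estimate the norm of $\sum_{i\le 4^n}(\psi_i-\tfrac12)R(V_i)$ by applying \eqref{0eqq112} on the last block and the trivial bound $\|R(V_i)\|\le 1$ on the first $4^{n-1}$ indices, arriving at the same constant $\eps=\tfrac{1}{16}$. Your write-up is in fact slightly more explicit than the paper's in handling the initial block $B_1=\{1,2,3,4\}$ and the case $n=1$, and the closing paragraph explaining why the geometric ratio must exceed $3$ is a nice addition.
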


\begin{proof}
 We will demonstrate that $\eps = \frac{1}{16}$ works. Applying repeatedly Lemma \ref{constr-old} with $n=4,12, \ldots, 4^{m}-4^{m-1}, \ldots$ we select collections of sets $\{V_k\}_{k=4^{m-1}+1}^{4^m}$ in $B_X$ such that  for every choice of $\theta_i = \pm 1$
\begin{equation} \label{eqq111}
\left\|\sum\limits_{k=4^{m-1}+1}^{4^m} \theta_i R(V_i)\right\| \geq \frac{4^{m}-4^{m-1}}{2}.
\end{equation}
Then, taking in account that $\psi_i(\omega)- \frac{1}{2} = \pm \frac{1}{2}$ for all $\omega \in \Omega$, we obtain
\begin{align*}
\hausd\left(\frac{1}{4^n}\sum\limits_{i=1}^{4^n} \psi_i(\omega) V_i, \frac{1}{4^{n}\cdot 2}\sum\limits_{i=1}^{4^n} V_i\right) = \frac{1}{4^n}\left\| R\left(\sum\limits_{i=1}^{4^n} \psi_i(\omega) V_i\right) -R\left(\frac{1}{2}\sum\limits_{i=1}^{4^n} V_i\right)\right\| \\
= \frac{1}{4^n}\left\| \sum\limits_{i=1}^{4^n} \psi_i(\omega) R\left(V_i\right) - \frac{1}{2}\sum\limits_{i=1}^{4^n} R\left(V_i\right)\right\| = \frac{1}{4^n}\left\| \sum\limits_{i=1}^{4^n} \left(\psi_i(\omega)- \frac{1}{2}\right) R\left(V_i\right) \right\| \\
\ge \frac{1}{4^n}\left\| \sum\limits_{i=4^{n-1}+1}^{4^n} \left(\psi_i(\omega)- \frac{1}{2}\right) R\left(V_i\right) \right\| - \frac{1}{4^n}\left\| \sum\limits_{i=1}^{4^{n-1}} \left(\psi_i(\omega)- \frac{1}{2}\right) R\left(V_i\right) \right\| \\
\ge \frac{1}{4^n}\frac{1}{2}\frac{4^{n}-4^{n-1}}{2} - \frac{1}{4^n}\frac{1}{2}4^{n-1} = \frac{1}{4^n}\frac{1}{4}\left(4^{n}-4^{n-1} - 2\cdot4^{n-1} \right) = \frac{1}{4^n}\frac{1}{4}4^{n-1}
 = \frac{1}{16}.
\end{align*}
\end{proof}


\section{The full form of SLLN in a finite-dimensional space} \label{form1fd}

The results of this section are not new. They follow, for example, from \cite[Theorem 1.19]{Molc}. We just present here a short independent proof.

\begin{theo} \label{thm-SLLN-form1-fd++}
Let $X$ be a Banach space,  $(\Omega, \Sigma, \mu)$ be a probability space and $\Psi_n: \Omega \to BC(X)$ be an independent sequence of random sets  with mathematical expectations  $E(\Psi_n) = U_n$, and let additionally all the values of all $\Psi_n$ lie in a common convex compact subset $W \subset X$. Then
\begin{equation} \label{eq0f1-fd}
\rho_H\left(\frac{1}{n}\sum\limits_{i=1}^{n} \Psi_i(\omega), \frac{1}{n}\sum\limits_{i=1}^{n} U_i \right) \xrightarrow[n \to \infty]{a.e.} 0.
\end{equation}
\end{theo}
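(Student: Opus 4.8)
The plan is to push everything through the R\aa dstr\"om embedding and then exploit compactness to bypass the absence of B-convexity in $Z(X)$. Since $\rho_H(A,B)=\|R(A)-R(B)\|$, the quantity in \eqref{eq0f1-fd} equals $\bigl\|\frac1n\sum_{i=1}^{n}R(\Psi_i(\omega))-\frac1n\sum_{i=1}^{n}R(U_i)\bigr\|$, so it suffices to prove the SLLN for the independent $Z(X)$-valued random vectors $\eta_i:=R\circ\Psi_i$, which satisfy $E(\eta_i)=R(U_i)$. The essential point is that all the $\eta_i$ take their values in $\mathcal W:=\{R(C):C\text{ a nonempty closed convex subset of }W\}$, and this $\mathcal W$ is a \emph{compact} subset of $Z(X)$: the family of nonempty closed convex subsets of the compact set $W$ is a closed and (since $W$ is totally bounded) totally bounded subset of the complete pseudometric space $(bc(X),\rho_H)$, hence compact, and $R$ restricted to it is an isometry. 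Moreover each $R(U_i)=E(\eta_i)$, being a Bochner integral of a $\mathcal W$-valued function against a probability measure, lies in the compact set $\overline{\conv}\,\mathcal W$.

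Next one passes to $\zeta_i:=\eta_i-R(U_i)$, an independent, uniformly bounded sequence of $Z(X)$-valued random vectors with $E(\zeta_i)=0$ whose values all lie in the compact set $K:=\mathcal W-\overline{\conv}\,\mathcal W$, and it remains to show $\frac1n\sum_{i=1}^{n}\zeta_i(\omega)\to 0$ a.e. Given $\eps>0$, I would partition the compact set $K$ into finitely many Borel pieces of diameter less than $\eps$, choose a point in each, and let $q_\eps\colon K\to X$ be the corresponding ``rounding'' map; it is Borel, takes finitely many values (hence maps into a finite-dimensional subspace $F_\eps$), and satisfies $\sup_{x\in K}\|q_\eps(x)-x\|<\eps$. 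Writing $\zeta_i=q_\eps(\zeta_i)+\bigl(\zeta_i-q_\eps(\zeta_i)\bigr)$, the vectors $q_\eps(\zeta_i)$ are independent (images of independent random vectors under a fixed Borel map), uniformly bounded, and $F_\eps$-valued, so the classical finite-dimensional SLLN (applied coordinatewise, say) gives $\frac1n\sum_{i=1}^{n}\bigl(q_\eps(\zeta_i)-E\,q_\eps(\zeta_i)\bigr)\to 0$ a.e.; since $\|E\,q_\eps(\zeta_i)\|=\|E\,q_\eps(\zeta_i)-E\zeta_i\|\le\eps$ we get $\bigl\|\frac1n\sum_{i=1}^{n}E\,q_\eps(\zeta_i)\bigr\|\le\eps$, while $\bigl\|\frac1n\sum_{i=1}^{n}(\zeta_i-q_\eps(\zeta_i))\bigr\|<\eps$ pointwise. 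Combining these three facts yields $\limsup_n\bigl\|\frac1n\sum_{i=1}^{n}\zeta_i(\omega)\bigr\|\le 2\eps$ for a.e. $\omega$. Taking $\eps=1/k$ and intersecting the resulting full-measure sets over $k\in\N$ gives $\frac1n\sum_{i=1}^{n}\zeta_i(\omega)\to 0$ a.e., which completes the argument.

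The step I expect to be the main obstacle is the first paragraph: one must be sure that the R\aa dstr\"om images of the closed convex subsets of $W$ form a genuinely compact (not merely bounded) subset of $Z(X)$ — this is precisely what replaces B-convexity and what makes the finite-dimensional reduction of the second paragraph legitimate — and one must check the attendant measurability points, namely that $\eta_i=R\circ\Psi_i$ is a bona fide $Z(X)$-valued random vector (this is built into the definition of a random set) and that $q_\eps\circ\zeta_i$ stays measurable and independent. The remainder is a soft approximation argument combined with the classical finite-dimensional law.
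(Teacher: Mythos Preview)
Your argument is correct and reaches the same conclusion as the paper, but the route genuinely differs after the common opening. Both proofs begin by pushing through the R\aa dstr\"om embedding and observing that the closed convex subsets of the compact $W$ form a compact set $R(BC(W))\subset Z(X)$, so that the centered vectors $\zeta_i$ live in a fixed compact $K$. From this point the paper proceeds \emph{topologically}: it produces a countable family $\{x_k^*\}\subset S_{X^*}$ whose evaluations separate points of $K$, notes that on the compact $K$ the topology of pointwise convergence at the $x_k^*$ coincides with the norm topology, applies the scalar SLLN at each $x_k^*$ separately, and intersects the countably many full-measure sets. You instead exploit compactness \emph{metrically}, via a finite $\eps$-net and the Borel rounding map $q_\eps$, which reduces the problem to the finite-dimensional SLLN plus a uniform $\eps$-error, followed by $\eps\downarrow 0$ along a sequence. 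Your approach is arguably more elementary---it avoids the ``compact Hausdorff plus weaker Hausdorff topology $\Rightarrow$ equal topologies'' step and the passage through $C(K)$---and makes the role of compactness very transparent as ``finite-dimensional up to $\eps$''; the paper's route is slicker once one has that topological lemma and needs only the one-dimensional law of large numbers rather than the finite-dimensional one. One trivial slip: your $q_\eps$ should map $K\to Z(X)$ (indeed into $K$), not $K\to X$.
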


\begin{proof} Recall that the space $Z$ from the Definition \ref{def-radstr} of  the R{\aa}dstr\"om embedding $R: BC(X) \to Z$ is the space of all continuous real functions on $S_{X^*}$ equipped with the uniform norm.

The collection $BC(W)$ of all non-empty convex closed subsets of $W$ is compact in the Hausdorff metric. This means that all the elements  $R(\Psi_i(\omega)), R(U_i)$ and their convex combinations lie in the compact convex subset $R(BC(W)) \subset Z$ and all pairwise differences of such elements lie in the compact subset $K:=R(BC(W))-R(BC(W)) \subset Z$.

Consider the mapping $T: S_{X^*} \to C(K)$, $(Tx^*)(f):= f(x^*)$. The space of continuous functions on a metric compact is separable, so in particular $ C(K)$ is separable, and $T(S_{X^*})$ is separable as well. Let $G=\{x_k^*\}_{k=1}^\infty \subset X^*$ be such a countable set that $T(G)$ is dense in $T(S_{X^*})$. Then $G$ separates points of $K$ in the sense that for every two different functions $f, g \in K$ there is $k \in \N$ such that $f(x_k^*) \neq g(x_k^*)$.

Denote $\nu$ the topology on $K$  generated by the norm and denote $\tau$ the topology in $K$ of pointwise convergence at all $\{x_k^*\}_{k=1}^\infty$. Since $K$ is compact in the topology $\nu$, and $\tau$ is a weaker Hausdorff topology, we have that $\nu = \tau$. In particular, every sequence in $K$ that converges at all $\{x_k^*\}_{k=1}^\infty$ is convergent uniformly.

For each fixed $x^* \in S_{X^*}$ the values $\left(R(\Psi_i(\omega))\right)\left(x^*\right)$ form a uniformly bounded  independent sequence of random variables on $\Omega$ with $E\left(\left(R(\Psi_i(\cdot))\right)\left(x^*\right)\right) = (R(U_i))(x^*)$. The ordinary SLLN for random variables says that for each $x^* \in S_{X^*}$
\begin{equation*} \label{eq1f1-fd-}
\left|\frac{1}{n}\sum\limits_{i=1}^{n} R\left(\Psi_i(\omega)\right)\left(x^*\right) - \frac{1}{n}\sum\limits_{i=1}^{n} R\left(U_i\right)(x^*)\right| \xrightarrow[n \to \infty]{a.e. \ in \ \omega} 0.
\end{equation*}
This means that for every $k \in \N$ there is a set $A_k \in \Sigma$ with $\mu(A_k) =0$ such that for every $\omega \in \Omega \setminus A_k$
\begin{equation} \label{eq1f1-fd}
\left|\left(R\left(\frac{1}{n}\sum\limits_{i=1}^{n} \Psi_i(\omega)\right) - R\left(\frac{1}{n}\sum\limits_{i=1}^{n} U_i\right)\right)(x_k^*)\right| \xrightarrow[n \to \infty]{} 0.
\end{equation}
Denote $A = \bigcup_{k=1}^\infty A_k$, $\mu(A) =0$. Then for every $\omega \in \Omega \setminus A$ the condition \eqref{eq1f1-fd} works for all $k \in \N$ simultaneously. As we remarked before, this implies that  for every $\omega \in \Omega \setminus A$

$$
\left\|R\left(\frac{1}{n}\sum\limits_{i=1}^{n} \Psi_i(\omega)\right) - R\left(\frac{1}{n}\sum\limits_{i=1}^{n} U_i\right)\right\|_{C(S_{X^*})}  \xrightarrow[n \to \infty]{} 0.
$$
Since $R$ is an isometry, this gives us the condition \eqref{eq0f1-fd} we want.
\end{proof}

As an immediate consequence we obtain the promised full form of SLLN for finite-dimensional spaces.

\begin{theo} \label{thm-SLLN-form1-fd}
Let $X$ be a finite-dimensional Banach space,  $(\Omega, \Sigma, \mu)$ be a probability space and $\Psi_n: \Omega \to BC(X)$ be a uniformly bounded independent sequence of random sets  with mathematical expectations  $E(\Psi_n) = U_n$. Then
\begin{equation} \label{eq0f1-fd}
\rho_H\left(\frac{1}{n}\sum\limits_{i=1}^{n} \Psi_i(\omega), \frac{1}{n}\sum\limits_{i=1}^{n} U_i \right) \xrightarrow[n \to \infty]{a.e.} 0.
\end{equation}
\end{theo}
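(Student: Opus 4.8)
The plan is to deduce the statement directly from Theorem~\ref{thm-SLLN-form1-fd++}, rather than to redo the argument from scratch. The only discrepancy between the two formulations is that Theorem~\ref{thm-SLLN-form1-fd++} requires all the values $\Psi_n(\omega)$ to lie in one fixed convex \emph{compact} set $W\subset X$, whereas here we merely assume that the sequence $(\Psi_n)$ is uniformly bounded. In a finite-dimensional space this gap is illusory, and exhibiting that is the whole content of the corollary.

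First I would invoke uniform boundedness to fix a constant $M>0$ with $\|\Psi_n(\omega)\|\le M$ for every $n\in\N$ and every $\omega\in\Omega$, and then set $W:=\{x\in X:\ \|x\|\le M\}$. This ball is convex, and since $\dim X<\infty$ it is compact by the Heine--Borel theorem. By the choice of $M$, each value $\Psi_n(\omega)$ is a non-empty bounded convex subset of $W$; passing to closures (which changes neither $\Psi_n(\omega)$ as an element of $BC(X)$ nor the expectations $U_n=E(\Psi_n)$) we may assume $\Psi_n(\omega)\in BC(W)$ for all $n$ and $\omega$.

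With this $W$ all the hypotheses of Theorem~\ref{thm-SLLN-form1-fd++} are met: $(\Psi_n)$ is an independent sequence of random sets with $E(\Psi_n)=U_n$ and with all values in the common convex compact set $W$. Applying Theorem~\ref{thm-SLLN-form1-fd++} then yields \eqref{eq0f1-fd} at once. I do not anticipate any genuine obstacle here, since all the real work already resides in Theorem~\ref{thm-SLLN-form1-fd++}; the present corollary is just the observation that, in finite dimensions, ``uniformly bounded'' automatically upgrades to ``contained in a common convex compact set'' because closed balls are compact. (If one preferred a self-contained argument, one could equally well repeat the proof of Theorem~\ref{thm-SLLN-form1-fd++} verbatim with $W=M B_X$, but the reduction above is shorter.)
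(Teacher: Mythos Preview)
Your proposal is correct and follows precisely the route the paper intends: the paper presents Theorem~\ref{thm-SLLN-form1-fd} as an ``immediate consequence'' of Theorem~\ref{thm-SLLN-form1-fd++} without further comment, and you have simply written out that immediate consequence---take $W=MB_X$, note it is convex and compact by finite-dimensionality, and apply the preceding theorem. There is nothing to add; your argument matches the paper's (implicit) one.
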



\section{The reduced form of SLLN} \label{form3}

The reduced form of SLLN for random sets that we demonstrate in Theorem \ref{thm-form3} at the end of this section, is relatively simple in the separable case. So, our main effort here is concentrated on the following ``Separable reduction lemma''.

\begin{lem}\label{sep_sub}
Let $F\colon \Omega \to BC(X)$ be a Bochner integrable multifunction.
Then there is a separable subspace $Y \subset X$ and a Bochner integrable multifunction $\widetilde{F}\colon \Omega \to BC(Y)$, such that  $\widetilde{F}(t)\subset F(t)$ and $\diam\widetilde{F}(t)= \diam F(t)$ for almost all $t \in \Omega$.
\end{lem}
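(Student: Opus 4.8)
The plan is to produce the separable subspace $Y$ and the multifunction $\widetilde F$ by a careful selection procedure, using the Bochner integrability of $F$ as the source of separability. First I would recall that $F$ Bochner integrable means $R\circ F\colon\Omega\to Z(X)$ is Bochner integrable, hence essentially separably valued: there is a null set $N_0$ and a separable subset of $Z(X)$ containing $\{R(F(t))\colon t\notin N_0\}$. This already gives a handle on ``how much'' of $X$ the sets $F(t)$ can see, but I cannot directly read off a separable subspace of $X$ from separability in $Z(X)$, so the real work is to extract two points realizing (approximately) the diameter of each $F(t)$ in a jointly measurable way.

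The key step is a measurable selection argument. For each $t$ pick $a(t),b(t)\in\overline{F(t)}$ with $\|a(t)-b(t)\|$ close to $\diam F(t)$; the point is to do this measurably in $t$. I would do this by first using the simple multifunctions $F_n\to F$ a.e.\ from the definition of Bochner integrability to reduce to the case where $F$ is, off a null set, a pointwise limit of simple multifunctions, and then on each piece of each $F_n$ the value is a fixed set $U\in BC(X)$; a countable dense subset of each such $U$ (which is automatically separable since $U$ is separable — indeed $\overline U$ is a closed bounded convex set in the separable subspace it spans) provides countably many measurable vector-valued functions $t\mapsto u_j(t)$ whose values are dense in $F(t)$ for a.e.\ $t$. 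Concretely: enumerate the (countably many) sets $U$ appearing among all pieces of all $F_n$, take a countable dense set in each, and let $\{g_m\colon\Omega\to X\}$ be the resulting countable family of simple (hence measurable) functions arranged so that for a.e.\ $t$ the set $\{g_m(t)\}_m$ is dense in $\overline{F(t)}$. Then $Y:=\overline{\operatorname{span}}\{g_m(t)\colon m\in\N,\ t\in\Omega\}$ is separable (a countable union of countably many points, since each $g_m$ is simple and takes finitely many values), and I set $\widetilde F(t):=\overline{F(t)\cap Y}$, or more safely $\widetilde F(t):=\overline{\{g_m(t)\colon m\in\N\}}$, which lies in $BC(Y)$, is contained in $F(t)$, and has $\diam\widetilde F(t)=\diam\overline{F(t)}=\diam F(t)$ for a.e.\ $t$ because the $g_m(t)$ are dense in $\overline{F(t)}$.

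It remains to check that $\widetilde F$ is Bochner integrable as a $BC(Y)$-valued multifunction, i.e.\ that $R_Y\circ\widetilde F$ is Bochner integrable on $\Omega$. The majorant is inherited: $\|\widetilde F(t)\|\le\|F(t)\|$, which has an integrable majorant. For measurability/approximation, the truncations $\widetilde F_n(t):=\overline{\{g_1(t),\dots,g_n(t)\}}$ are simple $BC(Y)$-valued multifunctions (each $g_j$ is simple, so the $n$-tuple $(g_1(t),\dots,g_n(t))$ takes finitely many values, hence so does the finite set $\{g_1(t),\dots,g_n(t)\}$), and $\rho_H(\widetilde F_n(t),\widetilde F(t))\to 0$ a.e.\ since $\bigcup_n\{g_1(t),\dots,g_n(t)\}$ is dense in $\widetilde F(t)$; the common integrable majorant $\|F(t)\|$ applies. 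By the decoded definition of the Bochner integral this is exactly what is needed.

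The main obstacle I expect is the bookkeeping in the measurable selection: making sure the countable family $\{g_m\}$ can be chosen so that $\{g_m(t)\}_m$ is dense in $\overline{F(t)}$ simultaneously for almost every $t$, not just for each fixed $t$. This is where one must genuinely use that $F$ is a pointwise a.e.\ limit of simple multifunctions $F_n$ with a common integrable majorant: one takes the dense sets inside the finitely many values of each $F_n$, and then argues that for a.e.\ $t$, density in the values of $F_n(t)$ for all $n$ combined with $F_n(t)\to F(t)$ in Hausdorff distance forces density in $\overline{F(t)}$. One subtlety to handle with care is that $\rho_H(F_n(t),F(t))\to 0$ controls one-sided Hausdorff distance in both directions, so every point of $\overline{F(t)}$ is approximated by points of $F_n(t)$ for large $n$, which in turn are approximated by the chosen dense points; a diagonal argument then finishes it.
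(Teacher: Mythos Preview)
There is a genuine gap. Your argument hinges on choosing, for each value $U=U_{n,k}$ of the approximating simple multifunctions, a countable dense subset of $U$; you justify this by saying ``$\overline U$ is a closed bounded convex set in the separable subspace it spans''. But this is circular: $\spn U$ is separable only if $U$ is already separable, and in a non-separable $X$ an element of $BC(X)$ need not be separable at all (think of $U=B_X$). The whole point of the lemma is precisely the non-separable case, so this step fails exactly where the lemma has content. A secondary problem is that your $g_m(t)$ live in $F_n(t)$, not in $F(t)$; hence $\widetilde F(t):=\overline{\{g_m(t)\}}$ is in general not contained in $\overline{F(t)}$ and can have strictly larger diameter (the early $F_n(t)$'s may be far from $F(t)$ and from each other).

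The paper repairs both issues with one idea: instead of seeking countable \emph{dense} subsets of the $U_{n,k}$ (which need not exist), it builds countable subsets $\widetilde A_{n,k}\subset U_{n,k}$ that (i) realise $\diam U_{n,k}$ and (ii) satisfy $\rho_H(\widetilde A_{n,k},\widetilde A_{m,j})\le\rho_H(U_{n,k},U_{m,j})$ for all indices. Condition (ii) is obtained by an iterated one-sided selection (Propositions \ref{prop-ch1}--\ref{prop-ch3}): given countable $B\subset V$, one can always find a countable $A\subset U$ with $\bar\rho_H(A,B)\le\bar\rho_H(U,V)$, no separability needed. The inequality (ii) makes $\widetilde F_n:=\sum_k\widetilde A_{n,k}\eins_{\Delta_{n,k}}$ Cauchy in $BC(Y)$ wherever $(F_n)$ is Cauchy, so $\widetilde F:=\lim\widetilde F_n$ exists, sits inside $\overline{F(t)}$ (as a Hausdorff limit of subsets of $F_n(t)$), and inherits the diameter by continuity of $\diam$. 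This is the missing mechanism in your sketch.
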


The proof will be done through a chain of propositions of increasing complexity.

\begin{propos}\label{prop-ch1}
Let $X$ be a Banach space, $U, V \in b(X)$ and let
$B \subset V$ be a countable subset. Then there is a countable subset $A \subset U$ such that $\bar{\rho}_H(A,B) \le \bar{\rho}_H(U,V)$.
\end{propos}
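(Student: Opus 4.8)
The plan is to read off the conclusion directly from the definition of the one-sided Hausdorff distance, via a routine countable selection. Write $r := \bar{\rho}_H(U,V) = \sup_{v\in V}\rho(v,U)$; since $U, V \in b(X)$ are bounded, $r \le \|U\| + \|V\| < \infty$. By definition of $r$, for every $b\in B\subset V$ one has $\rho(b,U)\le r$, so for each $m\in\N$ we may choose an element $a_{b,m}\in U$ with $\|b-a_{b,m}\| < r + \frac1m$.

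Next I would set
$$
A := \{a_{b,m} : b\in B,\ m\in\N\},
$$
which is a countable subset of $U$, being a countable union (indexed by the countable set $B$) of countable sets. In the degenerate case $B=\emptyset$ one just takes $A$ to be any singleton in the nonempty set $U$; then $\bar{\rho}_H(A,B)=0\le r$ trivially.

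It then remains to verify $\bar{\rho}_H(A,B)\le r$. Fix $b\in B$. Since $a_{b,m}\in A$ for every $m\in\N$, we get $\rho(b,A)\le\|b-a_{b,m}\|<r+\frac1m$ for all $m$, hence $\rho(b,A)\le r$. Taking the supremum over $b\in B$ yields $\bar{\rho}_H(A,B)=\sup_{b\in B}\rho(b,A)\le r=\bar{\rho}_H(U,V)$, as claimed. I do not expect any genuine obstacle here; the only subtleties worth noting are the finiteness of $r$ (so that the selection of the $a_{b,m}$ makes sense) and the trivial case $B=\emptyset$.
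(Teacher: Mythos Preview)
Your proof is correct and follows essentially the same approach as the paper's own argument: for each $b\in B$ and each $m\in\N$ you pick $a_{b,m}\in U$ with $\|b-a_{b,m}\|<\bar{\rho}_H(U,V)+\tfrac1m$ and take $A$ to be the countable collection of all such points. The only differences are cosmetic (you index by $b\in B$ rather than by an enumeration, and you explicitly address finiteness of $r$ and the case $B=\emptyset$).
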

\begin{proof}
Let us enumerate $B$ as $\{b_1, b_2, \ldots\}$. According to the definition of $\bar{\rho}_H(U,V)$, for every $v \in V$ and every $\eps > 0$ there is a $u \in U$ such that $\|u - v\| < \bar{\rho}_H(U,V) + \eps$. In particular, for each of $b_n$ there is a $u_{n,m} \in U$ such that
$$
\|u_{n,m} - v_n\| < \bar{\rho}_H(U,V) + \frac{1}{m}.
$$
Then $A = \{u_{n,m}\}_{n,m \in \N}$ is the set we need.
\end{proof}

\begin{propos}\label{prop-ch2}
Let $X$ be a Banach space, $U \in b(X)$ $V_m \in b(X)$, $m = 1,2, \ldots$ and let $A \subset U$, $B_m \subset V_m$, $m = 1,2, \ldots$, be countable subsets. Then there is a countable subset $\widetilde A$ such that $A \subset \widetilde A \subset U$ and $\bar{\rho}_H(\widetilde A,B_m) \le \bar{\rho}_H(U,V_m)$ for all $m = 1,2, \ldots$.
\end{propos}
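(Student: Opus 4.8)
The plan is to apply Proposition~\ref{prop-ch1} once for each index $m$ and then take a union. Concretely, for every $m \in \N$ apply Proposition~\ref{prop-ch1} to the bounded sets $U$ and $V_m$ together with the countable subset $B_m \subset V_m$; this yields a countable subset $A_m \subset U$ with $\bar{\rho}_H(A_m, B_m) \le \bar{\rho}_H(U, V_m)$. (The prescribed subset $A \subset U$ plays no role in obtaining the $A_m$; it only needs to be accommodated at the end.)

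Next I would set
$$
\widetilde A \;:=\; A \cup \bigcup_{m=1}^{\infty} A_m .
$$
This is a countable union of countable sets, hence countable, and by construction $A \subset \widetilde A \subset U$, so $\widetilde A$ has the required inclusion properties.

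It remains to verify the distance estimate. Fix $m$. Since $A_m \subset \widetilde A$, the monotonicity of $\bar{\rho}_H$ in its \emph{first} variable, recorded in the Preliminaries (if $A_m \subset \widetilde A$ then $\bar{\rho}_H(\widetilde A, B_m) \le \bar{\rho}_H(A_m, B_m)$, because enlarging the first set can only decrease each distance $\rho(b, \cdot)$), gives
$$
\bar{\rho}_H(\widetilde A, B_m) \;\le\; \bar{\rho}_H(A_m, B_m) \;\le\; \bar{\rho}_H(U, V_m),
$$
and since $m$ was arbitrary this is exactly the assertion.

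There is no real obstacle here: the argument is a routine combination of Proposition~\ref{prop-ch1}, the countability of a countable union of countable sets, and the first-variable monotonicity of the one-sided Hausdorff distance. The only point requiring a moment's care is that one must invoke monotonicity in the first variable (not the second), which is precisely what lets the passage from $A_m$ to the larger set $\widetilde A$ preserve the inequality $\bar{\rho}_H(\,\cdot\,,B_m) \le \bar{\rho}_H(U,V_m)$.
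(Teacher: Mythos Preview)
Your proof is correct and follows essentially the same approach as the paper: apply Proposition~\ref{prop-ch1} for each $m$ to obtain countable $A_m \subset U$ with $\bar{\rho}_H(A_m,B_m)\le\bar{\rho}_H(U,V_m)$, and set $\widetilde A = A \cup \bigcup_m A_m$. You even spell out the first-variable monotonicity step that the paper leaves implicit.
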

\begin{proof}
Applying successively Proposition \ref{prop-ch1}, we can build countable subsets $A_m \subset U$ such that
$$
\bar{\rho}_H(A_{m},B_m) \le \bar{\rho}_H(U,V_m), \quad m= 1,2, \ldots
$$
It remains to define the required $\widetilde A$ as $\widetilde A = A \cup \left(\bigcup_{m \in \N}A_m\right)$.
\end{proof}

\begin{propos}\label{prop-ch3}
Let $X$ be a Banach space and let $U_n \in b(X), n\in \N$, be a countable family of sets. Let $A_n \subset U_n$, $n\in \N$, be countable subsets. Then there are countable sets $\widetilde A_n$,  $n\in \N$, such that
$$A_n\subset \widetilde A_n \subset U_n$$
and, for all $n,m\in \N$,
\begin{equation} \label{prop-ch3}
\rho_H(\widetilde A_n, \widetilde A_m)\leq \rho_H(U_n,U_m).
\end{equation}
\end{propos}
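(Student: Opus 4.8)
The plan is to bootstrap Proposition \ref{prop-ch2} through a countable iteration. It is worth noting first the obstruction to a naive one‑pass approach: if for each $n$ we simply applied Proposition \ref{prop-ch2} once, with $U = U_n$, $A = A_n$ and the countable family $V_m = U_m$, $B_m = A_m$, we would only control $\bar{\rho}_H(\widetilde A_n, A_m)$, not $\bar{\rho}_H(\widetilde A_n, \widetilde A_m)$; when later we enlarge the \emph{target} set $A_m$ to $\widetilde A_m$, the new points may be far from $\widetilde A_n$. Since $\bar{\rho}_H$ is monotone (decreasing) only in its first argument, enlargements help on the left but can hurt on the right, so the two families of requirements cannot be satisfied simultaneously in a single step. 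This is the main difficulty, and it is exactly what forces a stagewise (diagonal) construction; the rest is routine bookkeeping about countable unions.

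The construction I would use is as follows. Put $A_n^{(0)} = A_n$ for all $n \in \N$. Assume the countable sets $A_m^{(k)}$ with $A_m \subset A_m^{(k)} \subset U_m$ have been defined for all $m \in \N$ (this family is fixed before the next step begins, so there is no circularity). For each $n$, apply Proposition \ref{prop-ch2} with $U = U_n$, $A = A_n^{(k)}$ and the countable family $V_m = U_m$, $B_m = A_m^{(k)}$, obtaining a countable set $A_n^{(k+1)}$ with $A_n^{(k)} \subset A_n^{(k+1)} \subset U_n$ and
$$
\bar{\rho}_H\bigl(A_n^{(k+1)}, A_m^{(k)}\bigr) \le \bar{\rho}_H(U_n, U_m) \qquad \text{for all } m \in \N.
$$
Finally set $\widetilde A_n = \bigcup_{k=0}^{\infty} A_n^{(k)}$.

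It then remains to verify the three requirements. Each $\widetilde A_n$ is a countable union of countable sets, hence countable, and $A_n = A_n^{(0)} \subset \widetilde A_n \subset U_n$ is immediate. For the metric estimate, fix $n, m \in \N$ and take any $b \in \widetilde A_m$; then $b \in A_m^{(k)}$ for some $k$, so the stage‑$(k+1)$ inequality gives $\rho\bigl(b, A_n^{(k+1)}\bigr) \le \bar{\rho}_H\bigl(A_n^{(k+1)}, A_m^{(k)}\bigr) \le \bar{\rho}_H(U_n, U_m)$, and since $A_n^{(k+1)} \subset \widetilde A_n$ we get $\rho(b, \widetilde A_n) \le \bar{\rho}_H(U_n, U_m)$. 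Taking the supremum over $b \in \widetilde A_m$ yields $\bar{\rho}_H(\widetilde A_n, \widetilde A_m) \le \bar{\rho}_H(U_n, U_m)$; interchanging the roles of $n$ and $m$ gives the symmetric bound, and taking the maximum of the two delivers \eqref{prop-ch3}. I expect no serious obstacle beyond correctly arranging the stages so that at step $k+1$ only closeness to the \emph{previous} stage's sets is demanded, which is what makes the union work.
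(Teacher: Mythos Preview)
Your proof is correct and is essentially the same as the paper's: both build a table $A_n^{(k)}$ (the paper writes $A_{n,k}$) by iterating Proposition~\ref{prop-ch2}, ensuring $A_n^{(k)}\subset A_n^{(k+1)}\subset U_n$ and $\bar\rho_H(A_n^{(k+1)},A_m^{(k)})\le\bar\rho_H(U_n,U_m)$, then set $\widetilde A_n=\bigcup_k A_n^{(k)}$ and verify the estimate exactly as you do. Your explanatory paragraph on why a single pass fails is a nice addition but the argument itself matches the paper's.
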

\begin{proof}
Let us define sets $A_{n,1}=A_n$. After that, we build a table $A_{n,k}$ of countable subsets in such a way that, for each $n, m, k \in \N$,
\begin{equation} \label{prop-ch3-eq01}
A_{n,k} \subset A_{n,k+1} \subset U_n \textrm{ and } \bar{\rho}_H(A_{n,k+1},A_{m,k}) \le \bar{\rho}_H(U_n,U_m).
\end{equation}
(We do this, applying recurrently Proposition \ref{prop-ch2} with $A = A_{n,k}$,  $U=U_n$, $V_m=U_m$,  and $B_m = A_{m,k}$, $m=1,2, \ldots$. Then  $\widetilde A$ from Proposition \ref{prop-ch2} is $A_{n,k+1}$ we need).

Now we define $\widetilde A_n = \bigcup_{k \in \N} A_{n,k}$. Then, evidently, $A_{n} \subset \widetilde A_n \subset U_n$. Let us demonstrate that
\begin{equation} \label{prop-ch3-eq02}
\bar{\rho}_H(\widetilde A_n, \widetilde A_m) \le \bar{\rho}_H(U_n,U_m).
\end{equation}
Indeed, for every $b \in \widetilde A_m = \bigcup_{k \in \N} A_{m,k}$, there is $k \in \N$ such that $b \in A_{n,k}$. Then, for every $\eps > 0$, we can apply \eqref{prop-ch3-eq01} and get $a \in A_{n,k+1} \subset \widetilde A_n$ with $\|a - b\| < \bar{\rho}_H(U_n,U_m) + \eps$. This proves \eqref{prop-ch3-eq02}.
Switching the roles of $n$ and $m$ in \eqref{prop-ch3-eq02}, we get
\begin{equation} \label{prop-ch3-eq03}
\bar{\rho}_H(\widetilde A_m, \widetilde A_n) \le \bar{\rho}_H(U_m,U_n).
\end{equation}
Then, inequalities  \eqref{prop-ch3-eq02} and  \eqref{prop-ch3-eq03} together give us the required condition  \eqref{prop-ch3}.
\end{proof}

\noindent\textit{Proof of Lemma \ref{sep_sub}}.
Due to the definition of Bochner integral, there are simple multifunctions $F_n\colon \Omega \to BC(X)$, $F_n=\sum_k U_{n,k} \eins_{\Delta_{n,k}}$ such that $F_n(t) \xrightarrow[n \to \infty]{a.e.} F(t)$.  Let us choose countable sets $A_{n,k}\subset U_{n,k}$ with $\diam A_{n,k}= \diam U_{n,k}$. Applying Proposition \ref{prop-ch3} to the countable collection of sets $(U_{n,k})$ we get corresponding countable subsets $\widetilde A_{n,k}$, $A_{n,k} \subset \widetilde A_{n,k} \subset U_{n,k}$ with
$$
\rho_H(\widetilde A_{n,k}, \widetilde A_{m,j})\leq \rho_H(U_{n,k},U_{m,j}).
$$
for all $n, k, m, j$. Now we can define the required separable subspace $Y$ as $Y = \spn \bigcup_{n, k} \widetilde A_{n,k}$  and
 $\widetilde F_n \colon \Omega \to BC(Y)$,  $\widetilde F_n=\sum_k \widetilde A_{n,k} \eins_{\Delta_{n,k}}$.  Due to the Cauchy criterion, the sequence of $F_n$ converges a.e. to some Bochner integrable function which is the desired $\widetilde{F}$.
\qed

Recall that a subset $W \subset X^*$ is called \textit{total} if for every $x\in X \setminus \{0\}$ there is $x^* \in W$ such that $x^*(x) \neq 0$. It is well-known that for every separable $X$ there is a countable total subset $W \subset X^*$, see for example  \cite[Section 17.2.4]{kadets-fa}.

\begin{theo}\label{one_point}
Let  $F\colon \Omega \to BC(X)$ be a Bochner integrable multifunction with
$$\int_{\Omega}^{}F d\mu =\{0\},$$
then for almost all $t\in \Omega$ the set $F(t)$ contains only one point.
\end{theo}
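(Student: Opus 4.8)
The plan is to translate the hypothesis through the R{\aa}dstr\"om embedding into a statement about support functions (the ``widths'' of $F(t)$), and then to use the Separable reduction lemma (Lemma~\ref{sep_sub}) to cut the uncountable family of directions down to a countable one. First, $\int_\Omega F\,d\mu=\{0\}$ means by definition that $\int_\Omega R\circ F\,d\mu = R(\{0\})$, which is the zero function on $S_{X^*}$. For a fixed $x^*\in S_{X^*}$ the evaluation $f\mapsto f(x^*)$ is a bounded linear functional on $Z(X)$, hence commutes with the Bochner integral, giving $\int_\Omega \bigl(R(F(t))\bigr)(x^*)\,d\mu(t)=0$. Applying this to both $x^*$ and $-x^*$ and adding, the nonnegative integrable function $w_{x^*}(t):=\bigl(R(F(t))\bigr)(x^*)+\bigl(R(F(t))\bigr)(-x^*)=\sup_{a\in F(t)}x^*(a)-\inf_{a\in F(t)}x^*(a)$ (the width of $F(t)$ in direction $x^*$) has integral zero, so $w_{x^*}(t)=0$ for a.e. $t$. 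The defect is that the exceptional null set depends on $x^*$.

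To remove this defect I would apply Lemma~\ref{sep_sub} to $F$: there is a separable subspace $Y\subset X$ and a Bochner integrable $\widetilde F\colon\Omega\to BC(Y)$ with $\widetilde F(t)\subset F(t)$ and $\diam\widetilde F(t)=\diam F(t)$ for a.e. $t$; it therefore suffices to show $\diam\widetilde F(t)=0$ a.e. Since $Y$ is separable, fix a countable total set $\{y_k^*\}_{k\in\N}\subset Y^*$ (as recalled just before the statement) with $\|y_k^*\|=1$, and let $\hat y_k^*\in S_{X^*}$ be norm-preserving Hahn--Banach extensions. For each $k$, the first paragraph provides a null set off which $w_{\hat y_k^*}(t)=0$; and since $\widetilde F(t)\subset F(t)$ and $\hat y_k^*$ restricts to $y_k^*$ on $Y$, for every $t$ one has $0\le \sup_{a\in\widetilde F(t)}y_k^*(a)-\inf_{a\in\widetilde F(t)}y_k^*(a)\le w_{\hat y_k^*}(t)$. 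Hence, off the union $N$ of these countably many null sets, every $y_k^*$ is constant on $\widetilde F(t)$.

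It then remains to argue pointwise: fix $t\notin N$. If $\widetilde F(t)$ contained two distinct points $a,b$, then totality of $\{y_k^*\}$ in $Y^*$ would give some $k$ with $y_k^*(a-b)\neq 0$, contradicting that $y_k^*$ is constant on $\widetilde F(t)$. So $\widetilde F(t)$ is a single point, i.e. $\diam\widetilde F(t)=0$; consequently $\diam F(t)=0$, which means $F(t)$ contains only one point, for almost every $t$.

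The main obstacle, and the reason Lemma~\ref{sep_sub} is invoked, is the possible non-separability of $X$: the elementary computation of the first paragraph controls only one direction at a time, whereas passing to the thinner, separably valued multifunction $\widetilde F$ --- which still records the diameter --- makes a single countable total set of directions suffice. A point worth stressing is that we neither know nor need $\int_\Omega\widetilde F\,d\mu=\{0\}$; only the inclusion $\widetilde F(t)\subset F(t)$ is used, in order to transfer the width bounds from $F$ to $\widetilde F$.
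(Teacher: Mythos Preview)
Your proof is correct and follows essentially the same route as the paper: establish that the width of $F(t)$ in any fixed direction vanishes a.e., then use Lemma~\ref{sep_sub} together with a countable total set of functionals on the separable auxiliary space to collapse the exceptional sets. Your handling of the non-separable reduction is in fact more explicit than the paper's one-line appeal to Lemma~\ref{sep_sub}, since you correctly note that one transfers the width bounds from $F$ to $\widetilde F$ via the inclusion $\widetilde F(t)\subset F(t)$ rather than by knowing anything about $\int_\Omega\widetilde F\,d\mu$.
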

\begin{proof}
{\bf Separable case.} Consider a countable total set $\{x_n^*\}_{n=1}^{\infty} \subset X^*$. Then for every $n\in \N$ the composition $x_n^*\circ F$ is Bochner integrable multifunction. Then $x_n^*\circ F(t) \approx [f_{1,n}(t), f_{2,n}(t)]$, where $f_{1,n}(t) = \inf x_n^*(F(t)), f_{2,n}(t)=\sup x_n^*(F(t))$. In these notations we have
$$\int_{\Omega}^{}x_n^*\circ F(t)d\mu = \left[\int_{\Omega}^{}f_{1,n}(t)d\mu, \int_{\Omega}^{}f_{2,n}(t)d\mu\right]=\{x_n^*(0)\}=\{0\}.$$
Together with the fact $f_{2,n}-f_{1,n}\geq 0$ the previous equality implies $f_{1,n}=f_{2,n}$ almost everywhere. That is, for every $n\in\N$ there is a set $A_n\in \Sigma$ with $\mu(A_n)=0$ such that for every $t\in \Omega\setminus A_n$ the equality holds $f_{1,n}(t)=f_{2,n}(t)$. Denote $A=\cup_{n=1}^{\infty}A_n$. Observe $\mu(A)=0$ and for every $n\in \N, t\in \Omega\setminus A$ we have $f_{1,n}(t)=f_{2,n}(t)$. This means $F(t)$ consists of one point. In fact, suppose there are $x_1, x_2 \in F(t)$ with $x_1\neq x_2$. Then there is $n\in \N$ such that $x_n^*(x_1-x_2)\neq 0$, so $x_n^*(x_1)\neq x_n^*(x_2)$, which is a contradiction.\\
{\bf Non-separable case.} The proof in this case follows directly from Lemma \ref{sep_sub} and the proof for the separable case.
\end{proof}

A.~Beck states his ``SLLN for random vectors in B-convex spaces'' in the reduced form. So, the validity of this form of SLLN for random sets in B-convex spaces is a natural guess. Indeed, the following result holds true:

\begin{theo} \label{thm-form3}
If $X$ is a B-convex Banach space, then for every independent sequence of random sets $\Psi_n: \Omega \to BC(X)$ with mathematical expectations equal to $\{0\}$ and uniformly bounded variances one has
$$
\left\|\frac{1}{n}\sum\limits_{i=1}^{n} \Psi_i(\omega)\right\| \xrightarrow[n \to \infty]{a.e.} 0.
$$
\end{theo}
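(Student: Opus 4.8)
The plan is to reduce the statement to the ordinary vector-valued strong law of large numbers in $X$ itself, exploiting the fact — already established in Theorem \ref{one_point} — that a Bochner integrable random set with expectation $\{0\}$ takes, almost surely, a one-point value.

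First I would record that the map $j\colon X \to Z$, $j(a) = R(\{a\})$, is a linear isometric embedding onto a closed subspace $j(X) \subset Z$: indeed $R(\{0\}) = 0$ and $\|R(\{a\}) - R(\{b\})\| = \rho_H(\{a\},\{b\}) = \|a-b\|$. Since $E(\Psi_n) = \{0\}$ for every $n$, Theorem \ref{one_point} provides null sets $N_n \in \Sigma$ such that $\Psi_n(t)$ is a singleton for every $t \in \Omega \setminus N_n$; put $\psi_n(t) := j^{-1}(R(\Psi_n(t)))$ for such $t$ and $\psi_n(t) := 0$ otherwise. Because $R\circ\Psi_n$ is Bochner integrable it is strongly measurable, almost everywhere separably valued, and takes its values almost everywhere in the closed set $j(X)$; composing with the Borel map $g\colon Z \to X$ that equals $j^{-1}$ on $j(X)$ and $0$ elsewhere, one sees that $\psi_n\colon \Omega \to X$ is a strongly measurable random vector satisfying $R\circ\Psi_n = j\circ\psi_n$ almost everywhere, with $\|\psi_n(t)\| = \|R(\Psi_n(t))\|$ a.e., so that $\psi_n$ is Bochner integrable.

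From $R\circ\Psi_n = j\circ\psi_n$ a.e.\ together with linearity and boundedness of $j$ one gets $j(E(\psi_n)) = E(R\circ\Psi_n) = R(E(\Psi_n)) = R(\{0\}) = j(0)$, hence $E(\psi_n) = 0$; since $\psi_n = g\circ(R\circ\Psi_n)$ up to a null set for a fixed Borel map $g$, independence of the family $\{R\circ\Psi_n\}$ (which is how independence of $\{\Psi_n\}$ is defined) passes to independence of $\{\psi_n\}$; and since $j$ is isometric, each $\psi_n$ has the same variance as $R\circ\Psi_n$, so the $\psi_n$ have uniformly bounded variances. Then I would invoke Beck's strong law of large numbers in the B-convex space $X$, in its form for independent zero-mean random vectors with uniformly bounded variances (the strengthening of \cite{beck62} recalled in the Introduction), to conclude that $\left\|\frac{1}{n}\sum_{i=1}^{n}\psi_i(\omega)\right\| \xrightarrow[n\to\infty]{a.e.} 0$.

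Finally I would glue things together on $\Omega \setminus N$, where $N := \bigcup_n N_n$ is null: for $\omega \notin N$ every $\Psi_i(\omega)$ equals the singleton $\{\psi_i(\omega)\}$, hence $\frac{1}{n}\sum_{i=1}^{n}\Psi_i(\omega) = \left\{\frac{1}{n}\sum_{i=1}^{n}\psi_i(\omega)\right\}$ and therefore $\left\|\frac{1}{n}\sum_{i=1}^{n}\Psi_i(\omega)\right\| = \left\|\frac{1}{n}\sum_{i=1}^{n}\psi_i(\omega)\right\|$, which tends to $0$ for almost all such $\omega$ — exactly the asserted convergence. I expect the only genuine content to be Theorem \ref{one_point} (whose non-separable case itself rests on the Separable reduction lemma, Lemma \ref{sep_sub}); the remaining steps — the isometric identification of $X$ with $j(X)\subset Z$, the transfer of independence and of the variance bound, and the passage from the vector statement back to the set statement — are routine measure-theoretic bookkeeping, the one delicate point being the Borel measurability of the recovered random vectors $\psi_n$.
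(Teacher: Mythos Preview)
Your argument is correct and follows exactly the route taken in the paper: invoke Theorem~\ref{one_point} to see that each $\Psi_n$ is almost everywhere a singleton, identify the resulting random vectors in $X$, and apply Beck's SLLN in the B-convex space $X$. The paper states this in two lines, leaving the measurability, independence, and variance transfer implicit; your version spells out precisely this bookkeeping (via the isometric embedding $j$ and the Borel retraction $g$), which is indeed routine.
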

\begin{proof}
Theorem \ref{one_point} states that such random sets are almost everywhere single-valued, that is they are in fact not random sets but random vectors, and the Beck's theorem is applicable.
\end{proof}


\section{The full form with finite-dimensional expectations and the intermediate form of SLLN} \label{form2}

In this section we extend Theorem \ref{thm-SLLN-form1-fd} to the case of B-convex space $X$ under the additional condition that expectations of the random sets lie in one the same finite-dimensional subspace of $X$.

That gives us the validity of the intermediate form of SLLN for random sets in B-convex spaces for the case of finite-dimensionality of the common mathematical expectation of the random sets in question.

For the proof we need three main ingredients: the validity of the SLLN in its full form in the case of finite-dimensional space $X$ (Theorem \ref{thm-SLLN-form1-fd}), Theorem \ref{one_point} on random sets whose expectation is a singleton, and the reduced form of SLLN in B-convex spaces (Theorem \ref{thm-form3}).

\begin{theo} \label{thm-SLLN-form1-fdexp}
If $X$ is a B-convex Banach space,  $(\Omega, \Sigma, \mu)$ is a probability space and $U_n \in BC(X)$, $n=1,2, \ldots$, lie in a finite-dimensional subspace $Y \subset X$, then for every uniformly bounded independent sequence of random sets $\Psi_n: \Omega \to BC(X)$ with  $E(\Psi_n) =U_n$ one has
$$
\rho_H\left(\frac{1}{n}\sum\limits_{i=1}^{n} \Psi_i(\omega), \frac{1}{n}\sum\limits_{i=1}^{n} U_i\right) \xrightarrow[n \to \infty]{a.e.} 0.
$$
\end{theo}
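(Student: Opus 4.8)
The plan is to split each random set $\Psi_i$ into a piece that, after a random translation, lies inside the finite-dimensional space $Y$ — where the finite-dimensional SLLN (Theorem~\ref{thm-SLLN-form1-fd}) applies — and a single-valued remainder, where the reduced form (Theorem~\ref{thm-form3}) applies. The bridge between the two pieces is Theorem~\ref{one_point}.

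First I would fix a bounded linear projection $P\colon X\to Y$; such a projection exists because $Y$ is finite-dimensional (extend a dual basis of $Y$ to $X^*$ by Hahn--Banach). Put $Q=I-P$, so that $\ker Q=P(X)=Y$. The composition $\Xi_i:=Q\circ\Psi_i\colon\Omega\to BC(X)$ is again a uniformly bounded independent sequence of Bochner integrable random sets, since passing from $\Psi_i$ to $Q\circ\Psi_i$ corresponds, through the R{\aa}dstr\"om embedding, to a bounded linear operation that commutes with the Bochner integral; and because $U_i\subset Y=\ker Q$ we get $E(\Xi_i)=Q(U_i)=\{0\}$. Theorem~\ref{one_point} then forces $\Xi_i$ to be almost surely single-valued, so we may write $\Xi_i(\omega)=\{q_i(\omega)\}$ off a null set, where $q_i\colon\Omega\to\ker P$ is an independent, uniformly bounded random vector with $E(q_i)=0$ (its measurability and Bochner integrability come from those of $\Xi_i$ via the Pettis theorem, with a separable reduction in the spirit of Lemma~\ref{sep_sub} to cover the non-separable case). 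Since $\Xi_i(\omega)=\{q_i(\omega)\}$ says precisely that every point of $\Psi_i(\omega)$ has one and the same $Q$-component $q_i(\omega)$, it follows that $\Psi_i(\omega)\subseteq q_i(\omega)+Y$ for almost all $\omega$.

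Next I would set $\Theta_i(\omega):=\Psi_i(\omega)+\{-q_i(\omega)\}$ (putting $\Theta_i:=\{0\}$ on the exceptional null set). By the previous step $\Theta_i\colon\Omega\to BC(Y)$ is a uniformly bounded independent sequence of random sets inside the finite-dimensional space $Y$, with $E(\Theta_i)=U_i$ (because $E(q_i)=0$), and $\Psi_i(\omega)=\Theta_i(\omega)+\{q_i(\omega)\}$ almost surely. Applying Theorem~\ref{thm-SLLN-form1-fd} to $(\Theta_i)$ inside $Y$ gives
$$
\rho_H\left(\frac1n\sum_{i=1}^n\Theta_i(\omega),\ \frac1n\sum_{i=1}^n U_i\right)\xrightarrow[n\to\infty]{a.e.}0 ,
$$
while applying Theorem~\ref{thm-form3} in the B-convex space $X$ to the single-valued random sets $\{q_i(\cdot)\}$ (expectation $\{0\}$, uniformly bounded) gives
$$
\left\|\frac1n\sum_{i=1}^n q_i(\omega)\right\|\xrightarrow[n\to\infty]{a.e.}0 .
$$
To conclude, I would combine these with the identity $\frac1n\sum_{i=1}^n\Psi_i(\omega)=\frac1n\sum_{i=1}^n\Theta_i(\omega)+\bigl\{\frac1n\sum_{i=1}^n q_i(\omega)\bigr\}$ and the elementary estimate $\rho_H(A+\{v\},B)\le\rho_H(A,B)+\|v\|$ (immediate from $R(A+\{v\})=R(A)+R(\{v\})$ and $\|R(\{v\})\|=\|v\|$), obtaining
$$
\rho_H\left(\frac1n\sum_{i=1}^n\Psi_i(\omega),\ \frac1n\sum_{i=1}^n U_i\right)\le\rho_H\left(\frac1n\sum_{i=1}^n\Theta_i(\omega),\ \frac1n\sum_{i=1}^n U_i\right)+\left\|\frac1n\sum_{i=1}^n q_i(\omega)\right\|\xrightarrow[n\to\infty]{a.e.}0 .
$$

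The step I expect to be the real obstacle is the passage, in the second paragraph, from the multifunction $Q\circ\Psi_i$ to a bona fide measurable, Bochner-integrable random \emph{vector} $q_i$: Theorem~\ref{one_point} delivers almost-sure single-valuedness, but one still has to extract a measurable selection and verify its integrability, and in the non-separable setting this is exactly where a separable reduction as in Lemma~\ref{sep_sub} does the work. Everything else is bookkeeping with the R{\aa}dstr\"om embedding and the two SLLNs cited above.
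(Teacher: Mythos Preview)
Your proof is correct and follows essentially the same route as the paper: project by $Q=I-P$ onto a complement of $Y$, use Theorem~\ref{one_point} to see that $Q\circ\Psi_i$ is almost surely a singleton $\{q_i(\omega)\}$, apply Theorem~\ref{thm-form3} to the $q_i$ and Theorem~\ref{thm-SLLN-form1-fd} to the shifted sets $\Psi_i-\{q_i\}\subset Y$, then combine via the triangle inequality for $\rho_H$. The only difference is cosmetic (notation and order of citation); your worry about extracting a measurable $q_i$ is a point the paper also passes over without comment.
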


\begin{proof}
Thanks to finite-dimensionality, $Y$ is complemented in $X$. This means that
there is a continuous linear projector $P: X \to Y$ and $X = Y \oplus \ker P$.
Denote $Q(x) = x - P(x)$, then $Q$ is a continuous linear projector of $X$ on  $\ker P$, and $\ker Q = Y$.

Consider auxiliary  random sets $\Phi_n: \Omega \to BC(\ker P)$: $\Phi_n(\omega) := Q(\Psi_n(\omega))$. For these random sets,
$$
E(\Phi_n) = Q(E(\Psi_n)) = Q(U_n) = \{0\}.
$$
Applying Theorems \ref{one_point} and \ref{thm-form3} (or, in fact, the Beck's theorem) we see that for almost all $\omega \in \Omega$ the set $\Phi_n(\omega)$ is a singleton, which means that there exists a random vector
$\phi_n: \Omega \to \ker P$ such that
$$
Q(\Psi_n(\omega)) = \Phi_n(\omega) \stackrel{a.e.}{=} \{\phi_n(\omega)\},
$$
and that
\begin{equation} \label{eq0f1-fd++}
\left\|\frac{1}{n}\sum\limits_{i=1}^{n} \{\phi_n(\omega)\}\right\| \xrightarrow[n \to \infty]{a.e.} 0.
\end{equation}
Moreover, for the independent random sets $\Lambda_n := \Psi_n(\omega) - \{\phi_n(\omega)\}$ we have
$$
Q\left( \Lambda_n(\omega)\right) =  \{\phi_n(\omega)\}- \{\phi_n(\omega)\}= \{0\}
$$
a.e., which means that $\Lambda_n(\omega) \subset Y$ a.e.
Taking in the account that $E(\Lambda_n) = E(\Psi_n) - E(\{\phi_n\}) = E(\Psi_n) - \{0\} =  E(\Psi_n) = U_n$ and applying Theorem \ref{thm-SLLN-form1-fd} we obtain that
\begin{equation} \label{eq0f1-fd-compl}
\rho_H\left(\frac{1}{n}\sum\limits_{i=1}^{n}\Lambda_n(\omega) , \frac{1}{n}\sum\limits_{i=1}^{n} U_i \right) \xrightarrow[n \to \infty]{a.e.} 0.
\end{equation}
Putting \eqref{eq0f1-fd-compl} and \eqref{eq0f1-fd++} together we obtain what we need:
\begin{align*}
&\rho_H\left(\frac{1}{n}\sum\limits_{i=1}^{n} \Psi_i(\omega), \frac{1}{n}\sum\limits_{i=1}^{n} U_i\right) \le \rho_H\left(\frac{1}{n}\sum\limits_{i=1}^{n} \Psi_n(\omega) , \frac{1}{n}\sum\limits_{i=1}^{n}\Lambda_n(\omega) \right) \\&+ \rho_H\left(\frac{1}{n}\sum\limits_{i=1}^{n}\Lambda_n(\omega), \frac{1}{n}\sum\limits_{i=1}^{n} U_i \right) \\ &=\left\|\frac{1}{n}\sum\limits_{i=1}^{n} \{\phi_n(\omega)\}\right\| + \rho_H\left(\frac{1}{n}\sum\limits_{i=1}^{n}\Lambda_n(\omega), \frac{1}{n}\sum\limits_{i=1}^{n} U_i \right) \xrightarrow[n \to \infty]{a.e.} 0.
\end{align*}
\end{proof}

\begin{cor} \label{thm-SLLN-form2}
Let $X$ be a B-convex Banach space,  $(\Omega, \Sigma, \mu)$ be a probability space, and let $U \in BC(X)$ be a subset with $\dim \spn (U) < \infty$, then for every uniformly bounded independent sequence of random sets $\Psi_n: \Omega \to BC(X)$ with mathematical expectations equal to $U$ one has
$$
\rho_H\left(\frac{1}{n}\sum\limits_{i=1}^{n} \Psi_i(\omega), U\right) \xrightarrow[n \to \infty]{a.e.} 0.
$$
\end{cor}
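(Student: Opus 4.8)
The plan is to obtain this corollary as a direct specialization of Theorem \ref{thm-SLLN-form1-fdexp}. First I would set $Y = \spn(U)$; by hypothesis $\dim Y < \infty$, so $Y$ is precisely the finite-dimensional subspace required by that theorem. Then I would take the constant sequence $U_n := U$ for all $n \in \N$. Since $U \in BC(X)$ and $U \subset Y$, the sequence $(U_n)$ satisfies all the hypotheses of Theorem \ref{thm-SLLN-form1-fdexp}, and because $E(\Psi_n) = U = U_n$, that theorem applies verbatim and yields
$$
\rho_H\left(\frac{1}{n}\sum\limits_{i=1}^{n} \Psi_i(\omega), \frac{1}{n}\sum\limits_{i=1}^{n} U_i\right) \xrightarrow[n \to \infty]{a.e.} 0.
$$

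The only remaining point is to identify $\frac{1}{n}\sum_{i=1}^{n} U_i$ with $U$. Here I would invoke the elementary fact that for a \emph{convex} set $A$ and positive reals $s, t$ one has $sA + tA = (s+t)A$: the inclusion $\supseteq$ is trivial, while $\subseteq$ is exactly the convexity of $A$. Iterating, $\sum_{i=1}^{n} U = nU$ in $bc(X)$, hence $\frac{1}{n}\sum_{i=1}^{n} U_i = \frac{1}{n}(nU) = U$ as elements of $BC(X)$. Substituting this equality into the displayed convergence gives the assertion of the corollary.

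I do not expect any genuine obstacle: the entire content is carried by Theorem \ref{thm-SLLN-form1-fdexp}, and the passage from $\frac{1}{n}\sum_{i=1}^{n} U_i$ to $U$ is the standard observation that a convex set absorbs Minkowski averages of itself. If anything merits a word of care, it is the bookkeeping between equivalence classes in $BC(X)$ and their representatives in $bc(X)$; but $U$ may itself be taken as a representative and the identity $sA + tA = (s+t)A$ holds at the level of sets, so no difficulty arises. This also records, as announced at the beginning of Section \ref{form2}, the validity of the intermediate form of SLLN for random sets in B-convex spaces when the common expectation spans a finite-dimensional subspace.
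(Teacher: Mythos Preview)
Your proposal is correct and is exactly the intended argument: the paper states the corollary without proof, as an immediate specialization of Theorem \ref{thm-SLLN-form1-fdexp} with $U_n \equiv U$ and $Y = \spn(U)$. Your observation that $\frac{1}{n}\sum_{i=1}^n U = U$ for convex $U$ is the only point to check, and you handle it correctly.
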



\section{Concluding remarks and open questions} \label{questions}

As we already mentioned at the beginning of Section \ref{form1}, the SLLN for sequences of independent identically distributed (i.i.d) random vectors works in every Banach space, and it can be transferred to i.i.d. random convex bounded sets with the help of the R{\aa}dstr\"om embedding \cite{artvit, purral}.  In the  case of i.i.d. convex random sets there is no difference between the full and the reduced forms of of SLLN, because both of them are valid in every space. The situation complicates if one tries to generalize the SLLN to non-convex sets. This requires additional accuracy with the definitions, but such generalizations are possible with the help of convexification argument that work in every finite-dimensional space  \cite{artvit}, or in every Banach space, if the considerations are restricted to compact sets \cite{arthan}, see \cite[pp. 653-654]{hess} for more information and references. It was recently remarked that the convexification works for all bounded sets in a space $X$ if and only if the space $X$ is B-convex \cite{ArtKad}. This convexification argument can be applied to our Theorem \ref{thm-SLLN-form1-fdexp} which gives the following result for free:

\begin{theo} \label{thm-SLLN-form1-nonc}
Let $X$ be a B-convex Banach space,  $(\Omega, \Sigma, \mu)$ be a probability space, and let  $\Psi_n: \Omega \to 2^X \setminus \{\emptyset\}$ be a uniformly bounded sequence of multifunctions such that $\conv (\Psi_n(\cdot))$ form an independent sequence of random sets  with  $E(\conv (\Psi_n))$, $n=1,2, \ldots$, lying in the same finite-dimensional subspace of $X$, then
$$
\rho_H\left(\frac{1}{n}\sum\limits_{i=1}^{n} \Psi_i(\omega), \frac{1}{n}\sum\limits_{i=1}^{n} E(\conv (\Psi_n))\right) \xrightarrow[n \to \infty]{a.e.} 0.
$$
\end{theo}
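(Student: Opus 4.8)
The plan is to deduce the statement from the convex case already settled in Theorem \ref{thm-SLLN-form1-fdexp}, by a convexification argument. Write $U_i := E(\conv(\Psi_i))$ and $c := \sup_{i}\sup_{\omega}\|\Psi_i(\omega)\| < \infty$. First I would record two elementary facts. For every non-empty bounded $A \subset X$ one has $\|\conv A\| = \|A\|$, since the norm is a convex function; and $\conv$ commutes with the cone operations, $\conv(A+B) = \conv A + \conv B$ and $\conv(tA)=t\,\conv A$ for $t>0$, so that
$$
\frac{1}{n}\sum_{i=1}^{n}\conv\bigl(\Psi_i(\omega)\bigr) \;=\; \conv\!\left(\frac{1}{n}\sum_{i=1}^{n}\Psi_i(\omega)\right)
$$
for every $\omega\in\Omega$ and every $n\in\N$.

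The convex random sets $\conv(\Psi_i(\cdot))$ are uniformly bounded by $c$ (by the first fact), form an independent sequence by hypothesis, and have expectations $U_i$ lying in a common finite-dimensional subspace; hence Theorem \ref{thm-SLLN-form1-fdexp} gives
$$
\rho_H\!\left(\frac{1}{n}\sum_{i=1}^{n}\conv\bigl(\Psi_i(\omega)\bigr),\ \frac{1}{n}\sum_{i=1}^{n}U_i\right) \xrightarrow[n\to\infty]{a.e.} 0 .
$$
On the other hand, since $X$ is B-convex and the sets $\Psi_i(\omega)$ are uniformly bounded, the convexification property of B-convex spaces from \cite{ArtKad} applies and yields, for \emph{every} $\omega\in\Omega$ with no exceptional null set,
$$
\rho_H\!\left(\frac{1}{n}\sum_{i=1}^{n}\Psi_i(\omega),\ \frac{1}{n}\sum_{i=1}^{n}\conv\bigl(\Psi_i(\omega)\bigr)\right) \;=\; \rho_H\!\left(\frac{1}{n}\sum_{i=1}^{n}\Psi_i(\omega),\ \conv\!\left(\frac{1}{n}\sum_{i=1}^{n}\Psi_i(\omega)\right)\right) \xrightarrow[n\to\infty]{} 0 .
$$
Adding the two estimates via the triangle inequality for $\rho_H$ on $b(X)$ gives $\rho_H\bigl(\frac{1}{n}\sum_{i=1}^{n}\Psi_i(\omega),\ \frac{1}{n}\sum_{i=1}^{n}U_i\bigr)\to 0$ a.e., which is exactly the assertion.

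The only non-routine ingredient is the convexification estimate: one needs that in a B-convex space the Hausdorff gap between the Cesàro average of uniformly bounded sets and that of their convex hulls vanishes. This is precisely the convexification phenomenon cited from \cite{ArtKad, artvit, arthan} (for finite-dimensional $X$ it is the classical Shapley--Folkman--Starr theorem). Everything else — that uniform boundedness survives passing to convex hulls, that $\conv$ is additive with respect to Minkowski sums, and the triangle inequality for $\rho_H$ — is bookkeeping, so I expect no genuine obstacle beyond quoting \cite{ArtKad} correctly and noting that the convexification convergence is deterministic (pointwise in $\omega$), so it does not enlarge the exceptional null set coming from Theorem \ref{thm-SLLN-form1-fdexp}.
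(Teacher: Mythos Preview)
Your proposal is correct and follows exactly the route the paper indicates: the paper does not give a separate proof but simply states that the result comes ``for free'' by combining Theorem~\ref{thm-SLLN-form1-fdexp} with the convexification phenomenon of \cite{ArtKad}, which is precisely the two-step argument (convex case plus deterministic convexification, glued by the triangle inequality) that you spell out.
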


The SLLN for non-identically distributed sets (like in our paper) was addressed previously by several authors, but the main results require additional compactness restrictions and are proved for weaker types of convergence than the convergence with respect to the Hausdorff distance, see \cite[Corollary 3.4]{Hiai} as a good example.

There are several natural questions that we are not able to answer.

\begin{prob}
Does the full form of SLLN for random sets work if $X$ is B-convex and all the values of random sets are finite-dimensional of the same dimension?
\end{prob}

\begin{prob}
Does the full form of SLLN for random sets work if $X$ is B-convex and  all the random sets have finite-dimensional expectations of the same dimension?
\end{prob}

\begin{prob} \label{prob 3}
Does the intermediate form of SLLN for random sets work if  $X$ is B-convex, without any additional restrictions on the common mathematical expectation of the random sets? In other words, is it true that if $X$ is a B-convex Banach space, and  $\Psi_n: \Omega \to BC(X)$ is a uniformly bounded independent sequence of random sets with the same mathematical expectation $U$ one has
$$
\rho_H\left(\frac{1}{n}\sum\limits_{i=1}^{n} \Psi_i(\omega), U\right) \xrightarrow[n \to \infty]{a.e.} 0?
$$
\end{prob}

We had a feeling that a counterexample to Problem \ref{prob 3} can be constructed based on our ideas from Section \ref{form1}, composing that example from parts taken from Theorem \ref{thm-absSLLN} in a way to get equal  mathematical expectations loosing finite-dimensionality that we have in Theorem \ref{thm-absSLLN}. Unfortunately, our attempts to perform such a construction failed.

\begin{prob} \label{prob 4}
Does the intermediate form of SLLN work if $X$ is B-convex and the common mathematical expectation of the random sets is a compact set?
\end{prob}
Here we vote for the positive answer \smiley, taking in account good approximations of compact sets by finite-dimensional ones. There are several obstacles that we did not overcome. One of them we formulate in the next problem.

\begin{prob} \label{prob 5}
Does the following small perturbation of Theorem \ref{thm-form3} hold true?

-- \emph{If $X$ is a B-convex Banach space, then for every $\eps > 0$ there is a $\delta > 0$ such that for every $U \in  BC(X)$, $U \subset \delta B_X$ and every independent sequence of random sets $\Psi_n: \Omega \to BC(X)$ with $\Psi_n(\omega) \subset B_X$  and  $E(\Psi_n)=U$, $n = 1, 2, \ldots$, one has a.e.
$$
\limsup_{n \to \infty}\left\|\frac{1}{n}\sum\limits_{i=1}^{n} \Psi_i(\omega)\right\| \le \eps.
$$}
\end{prob}

\section*{Acknowledgments}

The first author acknowledges support by the KAMEA program administered by the Ministry of Absorption, Israel.
The second author was supported by the Akhiezer Foundation Grant.

The project was initiated during the visit of the second author to Holon Institute of Technology (HIT), Israel. The second author is grateful to HIT for the support and hospitality.

\bibliographystyle{amsplain}

\begin{thebibliography}{10}
 \bibitem{arthan}
 Z.~Artstein and J.C.~Hansen, \emph{Convexification in limit laws of random sets in Banach spaces},
 Annals of Probability \textbf{13} (1985), 307-309.

\bibitem{ArtKad} Z. Artstein, V. Kadets,  B-convexity, convexification of Minkowski avarages in a Banach space and SLLN for random sets, to appear in J. of Convex Analysis.

\bibitem{artvit}
 Z.~Artstein and R.A.~Vitale, \emph{A strong law of large numbers for random compact sets}, Annals of Probability \textbf{3} (1975), 879--882.

 \bibitem{beck62}
 A.~Beck, \emph{A Convexity condition in Banach spaces and the Strong Law of Large Numbers}. Proc. Amer. Math. Soc. \textbf{13} (1962), 329--334.


\bibitem{beGiWa75}
 A.~Beck, D.P.~Giesy and P.~Warren, \emph{Recent developments in the theory of Strong Laws of Large Numbers for vector-valued random variables}, Theory of Probability and its Applications \textbf{20} (1975), 127--134.

\bibitem{die-uhl-J}
J.~Diestel and J.~J. Uhl, Jr., \emph{Vector measures}, American
Mathematical Society, Providence, R.I., Mathematical Surveys, \textbf{15} (1977), With a foreword by B. J. Pettis.

\bibitem{Giesy1966} D. P. Giesy, \emph{On a convexity condition in normed linear spaces}, Trans. Am. Math. Soc. \textbf{125} (1966), 114--116.

\bibitem{hess}
C.~Hess, \emph{Set-valued integration and set-valued probability theory: an overview,}  Chapter 14 in Handbook of Measure Theory Vol I, Endre Pap editor, North-Holland,  Amsterdam, 2002, pp.617--673.

\bibitem{Hiai} F.~Hiai, \emph{Convergence of Conditional Expectations and Strong Laws of Large Numbers for Multivalued Random Variables} Trans. Am. Math. Soc. \textbf{291} No. 2 (1985), 613--627.

\bibitem{kadets-fa} V. Kadets,  \textit{A course in Functional Analysis and Measure Theory}. Translated from the Russian by Andrei Iacob. Universitext. Cham: Springer. xxii, 539~p. (2018).




 \bibitem{L-Tz} J.~Lindenstrauss, L.~Tzafriri,  \emph{Classical Banach Spaces I and II: Sequence Spaces; Function Spaces}, Springer (1996).

\bibitem{Molc}
 I.~Molchanov, \emph{Theory of Random Sets, Second Edition}. Springer (2017).

\bibitem{Pel} A.~Pe{\l}czy{\'n}ski, \emph{All separable Banach spaces admit for every \(\varepsilon >0\) fundamental total and bounded by \(1+\varepsilon\) biorthogonal sequences}, Stud. Math. \textbf{55} (1976), 295--304.

\bibitem{purral} M.~L.~Puri and D.~A.~Ralescu, \emph{Limit theorems for random compact sets in Banach space}, Math.
 Proc. Cambridge Philos. Soc. \textbf{97} (1985), 151--158.

\bibitem{Radstr} H.~R{\aa}dstr\"om, \emph{An embedding theorem for spaces of convex sets}, Proc. Am. Math. Soc. \textbf{3} (1952), 165--169.


\end{thebibliography}
\end{document}